\documentclass{amsart}
\usepackage{stmaryrd}
\usepackage{silence}
\usepackage{amssymb}
\usepackage{faktor}
\usepackage[all,cmtip]{xy}
\usepackage{amsmath} 
\usepackage{mathrsfs}
\usepackage{tikz}
\usepackage{tikz-cd}
\usepackage{enumitem}
\usepackage{mathtools}
\usepackage[mathcal]{euscript}
\usepackage{graphicx}
\usepackage{url}
\usepackage{hyperref}
\usepackage[T1]{fontenc}
\usepackage{subfigure}
%\usepackage{fancyhdr}
%\pagestyle{fancy}
%\lhead{Knudsen}
%\rhead{Farber's conjecture}

\usetikzlibrary{shapes.geometric}
\usetikzlibrary{decorations.markings}
\usetikzlibrary{patterns,decorations.pathreplacing}

\pagestyle{plain}
\usepackage{ragged2e}

\definecolor{coloryellow}{RGB}{240,228,66}
\definecolor{colorskyblue}{RGB}{86,180,233}
\definecolor{colorvermillion}{RGB}{213,94,0}

\newcommand{\graphfont}{\mathsf}

\newcommand{\thetagraph}[1]{\graphfont{\Theta}_{#1}}
\newcommand{\completegraph}[1]{\graphfont{K}_{#1}}

\newcommand{\stargraph}[1]{\graphfont{S}_{#1}}
\newcommand{\graf}{\graphfont{\Gamma}}

\newcommand{\Z}{{\mathbb {Z}}} %%% for integral coefficients
\newcommand{\R}{{\mathbb {R}}} %%% for real coefficients
 %%% for complex coefficients
\newcommand{\Q}{{\mathbb {Q}}} %%% for rational coefficients
 %%% for homotopy dimension
\newcommand{\cat}{{\sf {cat}}} %%% for LS category
 %%% for sectional category
\newcommand{\tc}{{\sf {TC}}} %%% for TC
 %%% for zero-divisors cup-length

\theoremstyle{definition}
\newtheorem{definition}{Definition}[section]

\newtheorem{notation}[definition]{Notation}

\newtheorem{example}[definition]{Example}

\newtheorem{construction}[definition]{Construction}
\newtheorem{observation}[definition]{Observation}

\theoremstyle{plain}
\newtheorem{proposition}[definition]{Proposition}

\newtheorem{corollary}[definition]{Corollary}
\newtheorem{theorem}[definition]{Theorem}

\theoremstyle{definition}\newtheorem{problem}{Problem}

\theoremstyle{remark}
\newtheorem{remark}[definition]{Remark}

\makeatletter
\@addtoreset{definition}{section}
\makeatother

\usepackage{marginnote}
    \DeclareFontFamily{U}{wncy}{}
    \DeclareFontShape{U}{wncy}{m}{n}{<->wncyr10}{}
    \DeclareSymbolFont{mcy}{U}{wncy}{m}{n}
    \DeclareMathSymbol{\Sha}{\mathord}{mcy}{"58}

%slantbox{x}{y}{BOX} maps (1,0) (horizontal axis) to (x,y) and fixes (0,1) (vertical axis)
\newsavebox{\foobox}

\title{Farber's conjecture and beyond}
\author{Ben Knudsen}
\email{b.knudsen@northeastern.edu}
\address{Department of Mathematics, Northeastern University, Boston, MA 02115, USA}
%\keywords{Configuration spaces, topological complexity, braid groups, graphs}

\begin{document}

\begin{abstract}
We survey two decades of work on the (sequential) topological complexity of configuration spaces of graphs (ordered and unordered), aiming to give an account that is unifying, elementary, and self-contained. We discuss the traditional approach through cohomology, with its limitations, and the more modern approach through asphericity and the fundamental group, explaining how they are in fact variations on the same core ideas. We close with a list of open problems in the field.
\end{abstract}

\maketitle

\section{Introduction}

In real world situations, motion planning is often motion planning on a graph. One thinks of a self-driving car navigating a network of city streets---and with this thought comes traffic. Motion planning in this setting must be safe, which is to say collision-free. In this way, we are led to contemplate motion planning in the ordered \emph{configuration space}
\[F_k(\graf)=\{(x_1,\ldots, x_k)\in\graf^k:x_i\neq x_j\text{ if } i\neq j\}\]
of $k$ particles on the graph $\graf$. These spaces and their cousins, the unordered configuration spaces $B_k(\graf):=F_k(\graf)/\Sigma_k$, first appeared as mathematical objects in the pioneering work of Abrams, Ghrist, {\'{S}}wi\k{a}tkowski, and their respective collaborators \cite{Abrams:CSBGG,Swiatkowski:EHDCSG,Ghrist:CSBGGR,GhristKoditschek:SFRDB,AbramsLandauLandauPommersheimZaslow:ERWCD}. They have received a great deal of attention in the intervening two decades, some of which is represented in the text and references below.

In view of their emergence from considerations of collision-free motion planning, it is no surprise that configuration spaces of graphs soon became objects of study in the emerging effort to quantify the difficulty of motion planning using topological methods---what has now become the well-established subfield of topological complexity \cite{Farber:TCMP}. 

In his seminal study \cite{Farber:CFMPG} of the topological complexity of these spaces, Farber formulated a bold and surprising conjecture, namely that the complexity is eventually independent of $k$, the number of particles---that, for example, it is in a sense precisely as difficult to plan the motion of a billion agents in a network with five nodes as it is to plan the motion of ten. More precisely, Farber's conjecture is the $r=2$ case of the following result, which was proven by the author in \cite{Knudsen:TCPGBGSM}. We write $m(\graf)$ for the number of essential (valence at least three) vertices of $\graf$.

\begin{theorem}\label{thm:ordered}
Let $\graf$ be a connected graph with $m(\graf)\geq2$, and fix $r>0$. For $k$ at least $2m(\graf)$, we have the equality 
\[\frac{1}{r} \tc_r(F_k(\graf))=m(\graf).\]
\end{theorem}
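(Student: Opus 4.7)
The plan is to prove the two inequalities $\tc_r(F_k(\graf)) \leq r\cdot m(\graf)$ and $\tc_r(F_k(\graf)) \geq r\cdot m(\graf)$ separately. For the upper bound I would invoke Abrams' theorem: for $k$ at least $2m(\graf)$, the space $F_k(\graf)$ deformation retracts onto the discretized configuration space $D_k(\graf)$, a nonpositively curved cube complex whose dimension---by the combinatorial count of Abrams and Ghrist---is exactly $m(\graf)$, since cells of top dimension correspond to one simultaneous local rearrangement at each essential vertex. The upper bound then follows from the general inequality $\tc_r(X) \leq r\dim(X)$ for any connected CW complex $X$.

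For the lower bound I would exploit asphericity. Because $D_k(\graf)$ is nonpositively curved, $F_k(\graf)$ is a $K(G,1)$ for the graph braid group $G = \pi_1(F_k(\graf))$, and $\tc_r(F_k(\graf))$ is bounded below by the $r$-th zero-divisor cup length, the longest nonzero product of classes in $H^{*}(G^{r}; M)$ whose restrictions along the diagonal $G \hookrightarrow G^{r}$ vanish, for an appropriate coefficient system $M$. The natural candidate for realizing the bound is a product $\omega_1 \cup \cdots \cup \omega_{m(\graf)}$ with each factor $\omega_v$ of degree $r$ contributed by a single essential vertex $v$. To construct $\omega_v$ I would restrict to a star-shaped neighborhood $Y_v$ of $v$ and push most particles away, so that the resulting local configuration space is, up to homotopy, a configuration space on $Y_v$; such star configuration spaces are known to have $r$-th topological complexity equal to $r$, witnessed by an explicit zero-divisor class of degree $r$. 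Pulling this class back along the projection $F_k(\graf)\to F_{k_v}(Y_v)$ that remembers only the particles near $v$ yields $\omega_v$.

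The main obstacle I foresee is verifying that $\omega_1\cup\cdots\cup\omega_{m(\graf)}$ does not vanish. Multiplicativity in zero-divisor cohomology with twisted coefficients is delicate, and one genuinely needs cohomological independence of the essential vertices. This is exactly where the hypothesis $k \geq 2m(\graf)$ enters: it allows us to place at least two particles near each essential vertex simultaneously, so that the local classes are supported on disjoint subcomplexes of $D_k(\graf)$ and do not interfere. I expect this non-vanishing step to be the heart of the proof, and I would attack it either by writing down an explicit cellular cocycle on $D_k(\graf)$ and pairing it with a distinguished top-dimensional cycle, or---following the modern approach advertised in the abstract---by producing $m(\graf)$ mutually commuting subgroups of $G^{r}$, one per essential vertex, on which the coefficient system $M$ restricts to modules whose combined cup length is $r\cdot m(\graf)$.
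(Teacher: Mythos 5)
Your overall architecture (upper bound by dimension, lower bound by a product of ``local'' classes, one per essential vertex, with $k\geq 2m(\graf)$ used to put two particles at every essential vertex simultaneously) is the right shape, but both halves have problems, and the second one is a genuine gap. On the upper bound: Abrams' discretized complex $D_k(\graf)$ does \emph{not} have dimension $m(\graf)$ --- its cells are collections of disjoint vertices and closed edges, so a long subdivided segment already supports cells of dimension up to $k$ (a path graph, with $m=0$, gives $\dim D_k=k$). Also, Abrams' deformation retraction requires the graph to be sufficiently subdivided relative to $k$, not $k\geq 2m(\graf)$; the hypothesis $k\geq 2m(\graf)$ plays no role in the upper bound, which holds for all $k$. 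What is true, and what you need, is that $F_k(\graf)$ has \emph{homotopical} (equivalently, by asphericity and Eilenberg--Ganea, cohomological) dimension at most $m(\graf)$ --- {\'{S}}wi\k{a}tkowski's theorem, recovered here as Theorem \ref{thm:upper bound} and Corollary \ref{cor:upper bound} --- after which $\tc_r\leq r\,m(\graf)$ follows as you say. This part is repairable by citing the right model.

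The lower bound is where the proposal breaks. First, the map you want to pull back along, ``the projection $F_k(\graf)\to F_{k_v}(Y_v)$ that remembers only the particles near $v$,'' does not exist as a continuous map: which particles are near $v$ changes along a path of configurations. The maps out that do exist are (i) forgetting a \emph{fixed} subset of the labeled particles, and (ii) collapsing the complement of the open star of $v$ to a sink vertex, landing in a configuration space of a graph with sinks such as $B_2(\stargraph{d(v)}/\partial)$; the actual proof composes exactly these. Second, and more seriously, your fallback of an explicit cellular cocycle paired against a top cycle is the classical homological strategy, and it provably fails exactly in the cases not already covered by planarity or articulations: by the $\theta$-relation, for non-planar graphs the ordered star classes are trivial in $H_1$ (Proposition \ref{prop:planarity}), so degree-one homological detection of the local circles is impossible, and no one knows a substitute cocycle. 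The missing idea is to detect on $\pi_1$ instead: build two maximal tori $(S^1)^{m(\graf)}\to F_k(\graf)$ differing by a relabeling of the $2m(\graf)$ orbiting particles, and check injectivity and (conjugate-)disjointness of their images by composing particle-forgetting maps with the sink quotients, using that in $\pi_1\bigl(B_2(\stargraph{n}/\partial)\bigr)$, a free group, the star loop becomes the nontrivial commutator $[[\gamma_1],[\gamma_2]]$ (Example \ref{example:indiscrete}) --- invisible to homology but not to the fundamental group. One then applies the Grant--Lupton--Oprea/Farber--Oprea bound $\tc_r(X)\geq \mathrm{cd}(A_1\times\cdots\times A_r)$ for subgroups with trivially intersecting conjugates (Theorem \ref{thm:FO}, Corollary \ref{cor:homotopical tori}) with $A_1=\mathrm{im}(f_*)$ and $A_i=\mathrm{im}(g_*)\cong\Z^{m(\graf)}$, giving $rm(\graf)$. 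Your alternative phrasing ``$m(\graf)$ mutually commuting subgroups of $G^r$'' is not the structure that theorem needs, and as stated it is not clear how it would yield the bound; the conjugate-trivial-intersection condition, supplied for free in the ordered case by the relabeling trick, is the precise input. (Minor point: $F_2(\stargraph{3})\simeq S^1$, so its $\tc_r$ is $r-1$, not $r$; the local witnesses are commutators in free groups, not circle classes.)
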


As has long been known, the quantity $m(\graf)$ is a universal upper bound for the homotopical dimension of $F_k(\graf)$, which is realized for $k\geq 2m(\graf)$ \cite{Swiatkowski:EHDCSG}, so the theorem can be understood as asserting that $\tc_r$ is maximal in the same range.

As expanded upon below, various cases of Theorem \ref{thm:ordered} preceded the definitive result, namely the case of trees and $r=2$, due to Farber \cite{Farber:CFMPG}; the case of ``fully articulated'' and ``banana'' graphs and $r=2$, due to L\"{u}tgehetmann--Recio-Mitter in \cite{LuetgehetmannRecio-Mitter:TCCSFAGBG}; the case of trees and $r>2$, due to Aguilar-Guzm\'{a}n--Gonz\'{a}lez--Hoekstra-Mendoza \cite{Aguilar-GuzmanGonzalezHoekstra-Mendoza:FSMTMHTCOCST}; and the case of planar graphs and all $r$, due to the author \cite{Knudsen:FCPG}.

Rather surprisingly, the obvious unordered analogue of Farber's conjecture is false---even for linear trees!\footnote{An explicit example is given in \cite{LuetgehetmannRecio-Mitter:TCCSFAGBG}, the point being that the braid groups of a linear tree are right-angled Artin groups \cite{ConnollyDoig:BGRAAG}, whose topological complexity is known \cite{CohenPruidze:MPT}.} Apart from scattered results \cite{Scheirer:TCUCSCG,HoekstraMendoza:BHTCCST}, all confined to the setting of trees, the topological complexity of unordered configuration spaces of graphs remained largely mysterious until the following result, proven by the author in \cite{Knudsen:OSTCGBG}. 

\begin{theorem}\label{thm:unordered}
Let $\graf$ be a connected graph with $m(\graf)\geq2$, and fix $r>1$. For $k$ at least $2m(\graf)$ plus the number of trivalent vertices, we have the equality 
\[\frac{1}{r} \tc_r(B_k(\graf))=m(\graf)\] provided $\graf$ has no non-separating trivalent vertices.
\end{theorem}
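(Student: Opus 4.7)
The plan is to match upper and lower bounds of $r\cdot m(\graf)$. For the upper bound, one uses that $B_k(\graf)$ is aspherical, by Abrams' theorem, and, by \'Swi\k{a}tkowski's dimension estimate, is homotopy equivalent to a CW complex of dimension at most $m(\graf)$; the standard dimensional bound on sectional category then gives $\tc_r(B_k(\graf))\leq r\cdot m(\graf)$. All of the actual work is concentrated in the matching lower bound, and this is where the hypotheses on $k$ and on trivalent vertices become essential.

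For the lower bound I would follow the modern approach emphasized in the introduction, exploiting asphericity and working with the unordered graph braid group $\pi_1(B_k(\graf))$ rather than attempting to detect classes in the cohomology of $B_k(\graf)^r$ directly. The strategy is local-to-global: for each essential vertex $v$ of $\graf$, isolate a subspace (equivalently, a subgroup of the braid group) associated to configurations concentrated near $v$, and show that this local piece contributes $r$ to the sequential topological complexity. For a vertex of valence at least four, two particles near $v$ already suffice to produce a nontrivial loop winding around $v$, accounting for the term $2m(\graf)$ in the particle count. At a trivalent vertex, two particles do not leave enough room in the unordered quotient to detect such a loop, and one extra particle must be added; this is precisely the origin of the additional summand in $k$ equal to the number of trivalent vertices. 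I would then assemble the local contributions into a global lower bound by a cup-length style argument, verifying that the resulting product of local classes survives nontrivially in the (appropriately twisted) cohomology group computing $\tc_r$.

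The main obstacle, and the justification for the non-separating hypothesis, is that at a non-separating trivalent vertex $v$ the graph $\graf$ contains a cycle through $v$, and in the unordered configuration space a particle may be transported around this cycle so as to permute the three edges incident to $v$. The resulting relations in $\pi_1(B_k(\graf))$ identify the local ``branch choices'' at $v$ and collapse the expected local contribution, producing precisely the phenomenon responsible for the failure of the naive unordered analogue of Farber's conjecture alluded to in the discussion preceding the theorem. The hypothesis is calibrated to remove exactly these relations, and the hardest part of the argument will be verifying in detail that, under the non-separating assumption, no further hidden identification in the unordered braid group causes the planned global product of local classes to degenerate. This is where the unordered case genuinely diverges from the ordered case of Theorem~\ref{thm:ordered}, and it is where the bulk of the technical work must be concentrated.
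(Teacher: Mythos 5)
Your upper bound is fine, and your high-level orientation (asphericity, work in $\pi_1(B_k(\graf))$, localize at essential vertices) matches the paper's. But the crux of the lower bound is missing, and the mechanism you do propose would fail. ``Assembling local contributions by a cup-length style argument'' is exactly the strategy that breaks in the unordered setting: by the $\theta$-relation (Proposition \ref{prop:theta}), star classes at nearby vertices coincide in $H_1$, so products of locally defined zero-divisors can degenerate---indeed for $\thetagraph{3}$ no unordered maximal torus is even homologically decomposable, and local pieces do not contribute additively to $\tc_r$ in any formal sense. The paper's lower bound does not run through cup length at all; it runs through Theorem \ref{thm:FO} (Grant--Lupton--Oprea, Farber--Oprea): for an aspherical space one exhibits two rank-$m(\graf)$ free abelian subgroups of $\pi_1(B_k(\graf))$---the images on $\pi_1$ of two explicitly constructed $m(\graf)$-tori---all of whose conjugates intersect trivially, whence $\tc_r\geq \mathrm{cd}(\Z^{rm(\graf)})=rm(\graf)$ (Corollary \ref{cor:homotopical tori}). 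Your proposal never identifies this bridge from subgroup data to $\tc_r$, nor does it construct the two tori or a detection device certifying injectivity on $\pi_1$ and disjointness. In the paper these are certified by quotient maps to configuration spaces of graphs with sinks, the local graphs $\graphfont{\Lambda}(\pi_v)$, whose configuration spaces are homotopy equivalent to honest graphs (Proposition \ref{prop:local graph}), together with the explicit free-group computations of Examples \ref{example:indiscrete} and \ref{example:23}.

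Your accounting of the hypotheses is also off. Two orbiting particles detect the local loop even at a trivalent vertex, so decomposability is not the issue there; the extra particle per trivalent vertex is needed for \emph{disjointness}. At a vertex of valence at least $4$ one obtains two disjoint tori by choosing two embeddings of $\stargraph{3}$ with distinct edge sets, while at a trivalent vertex one must instead distinguish the two tori by the position of an additional stationary particle on one of two chosen edges, and these remain distinguishable after passing to the local graph only if the two edges lie in distinct components of $\graf\setminus\{v\}$---this is precisely where the separating hypothesis enters (Example \ref{example:23}). Your proposed mechanism for the non-separating case---that transporting a particle around a cycle through $v$ forces relations in $\pi_1(B_k(\graf))$ identifying the branch choices and collapsing the local contribution---is not established (embeddings of graphs always induce injections on $\pi_1$, in contrast with $H_1$) and is not how the hypothesis is used: what fails without it is the specific local detection scheme, not a proven degeneration in the braid group.
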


Here, we use the adjective ``non-separating'' to refer to a vertex whose complement in $\graf$ is connected. A separating vertex is otherwise known as an articulation (see Section \ref{subsection:articulation}).

Thus, the exact analogue of Farber's conjecture holds for graphs without trivalent vertices, and its failure can be repaired for most graphs by adding a few more particles. Whether the same is true for graphs with non-separating trivalent vertices is probably the most interesting open problem in the area---see Section \ref{section:problems}.

\begin{remark}
We comment on a few edge cases of Theorems \ref{thm:ordered} and \ref{thm:unordered}. If $m(\graf)=0$, then $\graf$ is homeomorphic either to an interval or to a circle. Thus, $F_k(\graf)$ is homotopy equivalent to a disjoint union of $k!$ or $(k-1)!$ such spaces, respectively, while $B_k(\graf)$ is homotopy equivalent to $\graf$ for every $k>0$. The calculation of $\tc_r$ in these cases is trivial.

If $m(\graf)=1$, then $\graf$ is homeomorphic to the cone on the discrete space $\{1,\ldots, n\}$ for some $n>2$. In this case, $F_k(\graf)$ is homotopy equivalent to a bouquet of circles of cardinality $1+(nk-2k-n+1)\frac{(k+n-2!)}{(n-1)!}$ (see \cite{Ghrist:CSBGGR}, for example). With the single exception of $(k,n)=(2,3)$, one readily checks that this number is at least $2$, so the equality $\tc_r=r$ holds (note that trivalency rears its head again here). The same considerations are valid in the unordered case.

Finally, for $r=1$, it is classical that the conclusion of Theorem \ref{thm:unordered} holds in the improved range $k\geq 2m(\graf)$ (see Corollary \ref{cor:cat}); indeed, we have the a priori stronger fact that the homotopical dimension of $B_k(\graf)$ is $m(\graf)$ in this range \cite{Swiatkowski:EHDCSG}. Thus, the presence of trivalent vertices delays the onset of maximality of $\tc_r(B_k(\graf))$ for $r>1$.
\end{remark}

\begin{remark}
After this article was completed, there appeared the preprint \cite{JankiewiczSchreve:PFGIGBG}, in which the authors claim to remove the proviso ``non-separating'' from Theorem \ref{thm:unordered}. We do not attempt a treatment of this very recent work, save to mention that it crucially uses the results of \cite{CripWiest:EGBSGRAAGBG} relating graph braid groups to right-angled Artin groups.
\end{remark}

\subsection{Outline} Our goal in this survey is to highlight the substantial common conceptual core in essentially all approaches to the topological complexity of configuration spaces of graphs. We hope to articulate this core in a way that is clear, elementary, and self-contained. What novelty there is beyond presentation is modest. Everything should be comprehensible to a reader with a grasp of basic algebraic topology and the fundamentals of Lusternik--Schnirelmann category \cite{James:CSLS} and (sequential) topological complexity \cite{Farber:TCMP,BasabeGonzalezRudyakTamaki:HTCS}.

We begin in Section \ref{section:tori} by considering a general strategy for bounding such invariants from below, which relies on probing the background space using maps from a torus, and which comes in two flavors, homological and homotopical. With this framing, we hope to make clear the close relationship between the earlier (co)homological approaches to Farber's conjecture and the ultimately definitive approach via the fundamental group. In Section \ref{section:configuration spaces}, we build and study the requisite maps from tori into the configuration spaces of interest. In doing so, the author has endeavored to make do with as little as possible---and has succeeded at least in amusing himself. Section \ref{section:detection} gives a brief tour of the applications by various authors of the homological version of the strategy of Section \ref{section:tori} to the tori of Section \ref{section:configuration spaces}, leading to various partial results. After a sobering assessment of the obstructions to further progress, we renounce homology and are carried to victory by the fundamental group. Finally, in Section \ref{section:problems}, we recommend a number of open problems to the eager and industrious reader.

\subsection{Conventions}
A graph is a finite CW complex of dimension at most $1$. The degree $d(v)$ or valence of a vertex $v$ is the number of connected components of its complement in a sufficiently small neighborhood. We say that $v$ is essential if $d(v)\geq3$.

Given an equivalence relation $\pi$ on a set $I$, we write $I/\pi$ for the set of equivalence classes and $[i]$ for the equivalence class of $i\in I$. Partitions of numbers may have empty blocks, and their blocks are distinguishable.

We adhere to the reduced convention on (sequential) topological complexity, according to which $\tc_r(\mathrm{pt})=0$. We further adhere to the convention $\tc_1=\cat$.

\subsection{Acknowledgements} The author learned most of what he knows about configuration spaces of graphs from and with Byunghee An and Gabriel Drummond-Cole. The work of his that he discusses here was nurtured at various stages by the hospitality of the American Institute of Mathematics, the Max Planck Institute for Mathematics, and Casa Matem\'{a}tica Oaxaca, and it was supported by NSF award DMS-1906174. He thanks the referee for a close and constructive reading.

\section{Decomposable tori}\label{section:tori}

In this section, we detail a simple strategy for bounding topological complexity and its cousins from below, the key players being maps from tori into the background space. The main results will follow by applying this strategy to certain tori constructed in Section \ref{section:configuration spaces}.

The strategy comes in two variants, the first of which is (co)homological in nature and goes back at least to \cite{Farber:CFMPG}. For many years, it powered essentially all progress on the (sequential) topological complexity of configuration spaces of graphs \cite{LuetgehetmannRecio-Mitter:TCCSFAGBG,Scheirer:TCUCSCG,Knudsen:FCPG,Aguilar-GuzmanGonzalezHoekstra-Mendoza:FSMTMHTCOCST,HoekstraMendoza:BHTCCST}. The second variant, valid in the aspherical context, deals directly with the fundamental group without the intermediary of (co)homology. Relying crucially on \cite{GrantLuptonOprea:NLBTCAS,FarberOprea:HTCAS}, this strategy led to the proof of the full conjecture \cite{Knudsen:TCPGBGSM} and to the first substantive progress in the unordered setting for graphs with positive first Betti number \cite{Knudsen:OSTCGBG}.

\subsection{Homological decomposability} Throughout this section, (co)homology is taken with integer coefficients (this choice is not essential). We write $T^n=(S^1)^n$ for the standard $n$-torus and $H^*(T^n)=\Lambda[x_1,\ldots, x_n]$ for its cohomology ring.

\begin{definition}
Let $X$ be a topological space.
\begin{enumerate}
\item An $n$-\emph{torus in $X$} is a map $f:T^n\to X$. 
\item We say that an $n$-torus in $X$ is \emph{homologically decomposable} if the homomorphism induced on $H_1$ is injective.
\item We say that a pair of $n$-tori in $X$ are \emph{homologically disjoint} if the images of the two homomorphisms induced on $H_1$ intersect trivially.
\end{enumerate}
\end{definition}

We learned the following argument from \cite{Farber:CFMPG} for $r=2$ and from \cite{Aguilar-GuzmanGonzalezHoekstra-Mendoza:FSMTMHTCOCST} for $r>2$. 

\begin{proposition}\label{prop:decomposable}
Let $X$ be a topological space.
\begin{enumerate}
\item If $X$ admits a homologically decomposable $n$-torus, then $\cat(X)\geq n$. 
\item If $X$ admits a homologically disjoint pair of such, then $\tc_r(X)\geq rn$ for every $r>1$.
\end{enumerate}
\end{proposition}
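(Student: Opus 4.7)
The plan is to apply the standard cohomological lower bounds---cup-length of $H^*(X)$ for $\cat(X)$, and zero-divisor cup-length of $H^*(X^r)$ for $\tc_r(X)$---pulled back from the torus via the given maps. I would work with rational coefficients throughout, since both hypotheses transfer to $\Q$ (the injectivity directly by flatness, the disjointness by clearing denominators) and this avoids torsion subtleties.

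For part (1), the first step is to dualize: injectivity of $f_*\colon H_1(T^n;\Q)\to H_1(X;\Q)$ yields surjectivity of $f^*\colon H^1(X;\Q)\to H^1(T^n;\Q)$. I would then lift the generators $x_i\in H^1(T^n;\Q)$ to classes $a_i\in H^1(X;\Q)$ with $f^*(a_i)=x_i$. Naturality of the cup product gives $f^*(a_1\smile\cdots\smile a_n)=x_1\smile\cdots\smile x_n$, the top class of $H^*(T^n;\Q)$, which is nonzero; hence the cup-length of $H^*(X;\Q)$ is at least $n$, so $\cat(X)\geq n$.

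For part (2), the homological disjointness hypothesis provides, by the same duality argument (extending independent dual functionals on the disjoint images), classes $a_i, b_i\in H^1(X;\Q)$ with $f^*(a_i)=x_i$, $g^*(a_i)=0$, $f^*(b_i)=0$, and $g^*(b_i)=x_i$. Writing $p_j\colon X^r\to X$ for the $j$-th projection and $\Delta_r\colon X\to X^r$ for the $r$-fold diagonal, I would then construct the following $rn$ elements of $\ker\Delta_r^*\subset H^1(X^r;\Q)$:
\[\alpha_i=p_1^*a_i-p_2^*a_i,\qquad \beta_i=p_2^*b_i-p_1^*b_i,\qquad \gamma_{i,j}=p_j^*a_i-p_1^*a_i\quad(j=3,\ldots,r).\]
To verify their product is nonzero, I would pull back along $F=f\times g\times f\times\cdots\times f\colon T^{nr}\to X^r$, computing
\[F^*\alpha_i=x_i^{(1)},\qquad F^*\beta_i=x_i^{(2)},\qquad F^*\gamma_{i,j}=x_i^{(j)}-x_i^{(1)},\]
where $x_i^{(k)}$ denotes the $i$-th generator of the $k$-th torus factor. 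For each fixed $i$, the resulting product $x_i^{(1)}x_i^{(2)}\prod_{j=3}^r(x_i^{(j)}-x_i^{(1)})$ collapses to $\pm x_i^{(1)}x_i^{(2)}\cdots x_i^{(r)}$ once one uses $(x_i^{(1)})^2=0$, and taking the product over $i$ yields, up to sign, the top class of $H^*(T^{nr};\Q)$. Hence the zero-divisor cup-length of $X$ is at least $rn$, giving $\tc_r(X)\geq rn$.

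The main obstacle is the combinatorial choice of the zero-divisors $\alpha_i,\beta_i,\gamma_{i,j}$ together with the asymmetric pullback map $F$: having only two tori but needing $rn$ independent zero-divisors, one must arrange for the two types of classes to cover the first two tensor slots while a single family of $a$-lifts suffices for the rest. Once these choices are set up, the argument reduces to bookkeeping in an exterior algebra.
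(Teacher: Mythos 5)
Your proposal is correct and follows essentially the same route as the paper: dualize decomposability/disjointness to get degree-one lifts $a_i,b_i$ with $f^*a_i=x_i$, $g^*a_i=0$ (and symmetrically), then use the cup-length bound for $\cat$ and a nonzero $rn$-fold product of zero-divisors, detected by pulling back along a product of the two tori, for $\tc_r$. The only deviations are cosmetic and, if anything, tidier: rational coefficients (which make surjectivity of $f^*$ on $H^1$ immediate) and zero-divisors anchored at the first factor with test map $f\times g\times f^{r-2}$, so the product collapses inside the exterior algebra via $x_i^2=0$ instead of the paper's factor-by-factor degree elimination along $f\times g^{r-1}$.
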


The proof of the second claim will rely on the production of certain zero-divisors. The negative sign in the following construction leads to various signs in the ensuing argument, which we suppress in order not to distract from the main point. Alternatively, the reader may choose (along with much of the literature) to work over $\mathbb{F}_2$

\begin{notation}
Given a cohomology class $x\in H^*(Y)$, we write $\zeta(x)=x\otimes 1-1\otimes x$, regarded as a cohomology class in $H^*(Y\times Y)$. More generally, fixing $r>1$ and $1\leq a<b\leq r$, we write $\zeta^{ab}(x)$ for the pullback of $\zeta(x)$ along the projection $Y^r\to Y\times Y$ onto the $a$th and $b$th factors.
\end{notation}

Note that the class $\zeta^{ab}(x)$ constructed above is an $r$-fold zero-divisor.

\begin{proof}[Proof of Proposition \ref{prop:decomposable}]
Let $f$ be a decomposable $n$-torus in $X$. By the universal coefficient theorem and decomposability, the homomorphism $f^*:H^1(X)\to H^1(T^n)$ is surjective. Choosing a preimage $y_i$ of each $x_i$, we have 
\[f^*\left(\bigsqcap_{i=1}^ny_i\right)=\bigsqcap_{i=1}^nf^*(y_i)=\bigsqcap_{i=1}^nx_i\neq 0,\] so $\bigsqcap_{i=1}^n y_i\neq 0$. Thus, the cup length of $X$ is at least $n$, implying the first claim.

For the second claim, let $g$ be a second decomposable $n$-torus with $f$ and $g$ disjoint. As before, we choose a preimage $z_i$ of each $x_i$ under $g^*$. By disjointness (after choosing an inner product on $H_1$, say), we may arrange our choices of $y_i$ and $z_i$ so that $y_i$ restricts to the trivial functional on the image of $g_*$ (resp. $z_i$, $f_*$). The rest of the argument is a little clearer in the case $r=2$, so we treat it first. Setting $\zeta_f:=\bigsqcap_{i=1}^n \zeta(y_i)$ (resp. $g$, $z_i$), we have \begin{align*}
\zeta_f\zeta_g&=\bigsqcap_{i=1}^n(y_i\otimes 1-1\otimes y_i)\bigsqcap_{i=1}^n(z_i\otimes 1-1\otimes z_i)\\
&=\sum_{S,T\subseteq \{1,\ldots, n\}}\pm y_Sz_T\otimes y_{S^c}z_{T^c},
\end{align*} where we have made the abbreviation $y_S=\bigsqcap_{i\in S} y_i$ (resp. $z$, $T$). Since $f^*z_i=0$ and $g^*y_i=0$ for $1\leq i\leq n$, we obtain the equation
\[(f\times g)^*\zeta_f\zeta_g=\bigsqcap_{i=1}^n x_i\otimes \bigsqcap_{i=1}^n x_i\neq0.\] Since $\zeta_f$ and $\zeta_g$ are both $n$-fold products of zero-divisors, the lower bound of $2n$ follows.

For general $r$, given $1\leq a<b\leq r$, we set $\zeta_f^{ab}:=\bigsqcap_{i=1}^n \zeta^{ab}(y_i)$ (resp. $g$, $z_i$) and calculate that
{\small\[\zeta^{12}_f\zeta^{12}_g\bigsqcap_{j=3}^r\zeta_g^{(j-1)j}=\sum_{S_i\subseteq \{1,\ldots, n\}, 1\leq j\leq r}\pm y_{S_1}z_{S_2}\otimes y_{S_1^c}z_{S_2^c}z_{S_3}\otimes z_{S_3^c}z_{S_4}\otimes\cdots\otimes z_{S_{r-1}^c}z_{S_r}\otimes z_{S_r^c}.\]}Applying $(f\times g^{r-1})^*$ and evaluating on the fundamental class of $(T^n)^r$, inspection of the last tensor factor shows that any term with $S_r\neq \varnothing$ vanishes for degree reasons. Moving to the next to last factor, the same reasoning shows that any term with $S_{r-1}\neq \varnothing$ vanishes. Continuing in this way, we are left with the terms satisfying the condition $S_j=\varnothing$ for $j>2$. The same reasoning as in the case $r=2$ permits us to impose the further condition $S_2=S_1^c=\varnothing$, eliminating all but one term. For this last term, we have \[(f\times g^{r-1})^*\left(\bigsqcap_{i=1}^ny_i\otimes \left(\bigsqcap_{i=1}^n z_i\right)^{\otimes r-1}\right)=\left(\bigsqcap_{i=1}^nx_i\right)^{\otimes r},\] which pairs to $1$ with the fundamental class. Thus, the class $\zeta^{12}_f\zeta^{12}_g\bigsqcap_{j=3}^r\zeta_g^{(j-1)j}$ is an $rn$-fold product of $r$-fold zero-divisors, implying the desired lower bound.
\end{proof}

\subsection{Homotopical decomposability} Given the heavy use of (co)homology in the previous section, it may come as something of a surprise that the ideas presented there can be applied in some situations in which an $n$-torus $f:T^n\to X$ induces \emph{zero} on $H_1$---see Section \ref{section:obstacles} for a highly relevant example of this phenomenon.

\begin{definition}
Let $X$ be a topological space.
\begin{enumerate}
\item We say that an $n$-torus in $X$ is \emph{homotopically decomposable} if the homomorphism induced on $\pi_1$ is injective.
\item We say that a pair of $n$-tori in $X$ are \emph{homotopically disjoint} if the conjugates of the images of the two homomorphisms induced on $\pi_1$ intersect trivially.
\end{enumerate}
\end{definition}

For aspherical spaces, there is a direct analoge of Proposition \ref{prop:decomposable}(2) for homotopically disjoint tori, which is a special case of the following result of \cite{FarberOprea:HTCAS} (following \cite{GrantLuptonOprea:NLBTCAS} for $r=2$).

\begin{theorem}\label{thm:FO}
Let $X$ be an aspherical space with $\pi_1(X)=\pi$. Given subgroups $A_i\leq \pi$ for $1\leq i\leq r$ we have \[\tc_r(X)\geq \mathrm{cd}\left(\bigsqcap_{i=1}^r A_i\right)\] provided $\bigcap_{i=1}^rg_iA_ig_i^{-1}=\{1\}$ for every $(g_1, \ldots, g_r)\in \pi^r$.
\end{theorem}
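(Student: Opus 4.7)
The plan is to model the $\tc_r$-fibration as an honest covering map in the aspherical setting, pull this covering back along a classifying map for the subgroups $A_i$, and invoke the Berstein--Schwarz lower bound for the Lusternik--Schnirelmann category of a $K(G,1)$.

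First I would model $\Delta_r : X \to X^r$ as a covering. Since $X = K(\pi,1)$, we have $X^r = K(\pi^r,1)$, and $\Delta_r$ classifies the diagonal subgroup inclusion $\Delta(\pi) \hookrightarrow \pi^r$. The long exact sequence of homotopy groups for the homotopy fiber of $\Delta_r$ collapses, since $\pi_n(X) = 0$ for $n \geq 2$, showing that this fiber is weakly equivalent to the discrete $\pi^r$-set $\pi^r / \Delta(\pi)$. Thus, after fibrant replacement, $\Delta_r$ is weakly equivalent (over $X^r$) to a covering map $\bar X \to X^r$, and $\tc_r(X) = \mathrm{secat}(\Delta_r) = \mathrm{secat}(\bar X \to X^r)$.

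Next, set $H = A_1 \times \cdots \times A_r \leq \pi^r$ and $Y = K(H, 1)$, and let $f : Y \to X^r$ classify the inclusion of $H$. By monotonicity of sectional category under pullback, $\tc_r(X) \geq \mathrm{secat}(f^*\bar X \to Y)$. The pullback is the covering of $Y$ classified by the restricted $H$-action on $\pi^r/\Delta(\pi)$, and a direct computation shows that the $H$-stabilizer of the coset $(g_1, \ldots, g_r)\Delta(\pi)$ is isomorphic to $\bigcap_i g_i^{-1} A_i g_i$. Under the hypothesis of the theorem (equivalent to the same statement after inversion of each $g_i$), every such stabilizer is trivial, so $H$ acts freely on $\pi^r/\Delta(\pi)$ and $f^*\bar X$ is a disjoint union of copies of the universal cover $\tilde Y$, one per $H$-orbit.

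Finally, a section of a disjoint union of universal covers over an open $U \subseteq Y$ exists if and only if each connected component of $U$ lifts to $\tilde Y$, which---since $Y$ is aspherical---is equivalent to each component being null-homotopic in $Y$. This is precisely the condition defining a categorical cover, so $\mathrm{secat}(f^*\bar X \to Y) = \cat(Y)$, and by Berstein--Schwarz $\cat(K(H,1)) \geq \mathrm{cd}(H)$. Chaining the inequalities yields $\tc_r(X) \geq \mathrm{cd}(H) = \mathrm{cd}(A_1 \times \cdots \times A_r)$.

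The main technical obstacle is the first step: verifying that the fibrant replacement of the diagonal is weakly equivalent, \emph{as a fibration over $X^r$}, to the covering $\bar X \to X^r$, so that sectional categories agree. This is standard but depends crucially on the asphericity of $X$. The stabilizer calculation matching the hypothesis is then routine, but its relevance is transparent only after one has the covering model in hand.
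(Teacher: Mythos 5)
This theorem is not proved in the survey: it is quoted from \cite{FarberOprea:HTCAS} (following \cite{GrantLuptonOprea:NLBTCAS} for $r=2$), so your argument can only be compared with those sources, where essentially this covering-space argument appears. Your proof is correct: asphericity lets you replace the fibrational substitute of $\Delta_r$ by the covering of $X^r$ associated to the $\pi^r$-set $\pi^r/\Delta(\pi)$ (via Dold's theorem, or more directly via the factorization $X\xrightarrow{\simeq}\widetilde{X}^r/\Delta(\pi)\to X^r$, which identifies the sectional categories); the stabilizer of the coset $(g_1,\ldots,g_r)\Delta(\pi)$ in $H=A_1\times\cdots\times A_r$ is indeed $\bigcap_i g_i^{-1}A_ig_i$, so the hypothesis makes the pullback to $K(H,1)$ a disjoint union of universal covers; and the equivalence of ``admits a section'' with ``each component includes null-homotopically'' (valid because open subsets of a CW complex have CW homotopy type) combined with the Berstein--Schwarz inequality $\cat(K(H,1))\ge \mathrm{cd}(H)$ yields the bound. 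The only remark worth adding is that you need just the one-directional inequality $\mathrm{secat}(f^*\bar X\to Y)\ge \cat(Y)$, which is exactly what your lifting criterion gives, so no appeal to Eilenberg--Ganea or to the reverse inequality is required.
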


\begin{corollary}\label{cor:homotopical tori}
Let $X$ be an aspherical space. If $X$ admits a homotopically disjoint pair of homotopically decomposable $n$-tori, then $\tc_r(X)\geq rn$ for every $r>1$.
\end{corollary}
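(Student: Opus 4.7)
The plan is to derive the corollary directly from Theorem \ref{thm:FO} by making a judicious choice of the subgroups $A_i \leq \pi = \pi_1(X)$. Let $f, g : T^n \to X$ denote the hypothesized pair of homotopically decomposable, homotopically disjoint $n$-tori, and set $A := f_*(\pi_1(T^n))$ and $B := g_*(\pi_1(T^n))$. By homotopical decomposability, both $f_*$ and $g_*$ are injective, so $A \cong \mathbb{Z}^n \cong B$.

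For the given $r > 1$, I would then set $A_1 := A$ and $A_i := B$ for $2 \leq i \leq r$, and verify the two hypotheses of Theorem \ref{thm:FO}. First, since the $A_i$ are all free abelian of rank $n$, the external product $\prod_{i=1}^r A_i$ is isomorphic to $\mathbb{Z}^{rn}$, which has cohomological dimension exactly $rn$ (e.g., as the fundamental group of the aspherical space $T^{rn}$). Second, given any $(g_1, \ldots, g_r) \in \pi^r$, the intersection in question satisfies
\[
\bigcap_{i=1}^r g_i A_i g_i^{-1} \;\subseteq\; g_1 A g_1^{-1} \cap g_2 B g_2^{-1},
\]
which is trivial by the assumed homotopical disjointness of $f$ and $g$ (which asserts precisely that any conjugate of $A$ meets any conjugate of $B$ only in the identity). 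Applying Theorem \ref{thm:FO} then yields $\tc_r(X) \geq rn$, as desired.

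There is essentially no obstacle: the whole argument is a direct unpacking of definitions plus a computation of the cohomological dimension of $\mathbb{Z}^{rn}$. The only minor point worth flagging is that we are using the disjointness hypothesis to control the intersection of $r$ conjugates at once, which is why it suffices to bound the pairwise intersection $g_1 A g_1^{-1} \cap g_2 B g_2^{-1}$ — an instance where the symmetry of the situation (all but one $A_i$ equal to $B$) makes the asymmetric pairwise disjointness assumption adequate even for arbitrary $r$.
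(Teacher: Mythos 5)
Your proof is correct and follows exactly the paper's own argument: the same choice $A_1=\mathrm{im}(f_*)$, $A_i=\mathrm{im}(g_*)$ for $i\geq 2$, the same containment of the $r$-fold intersection of conjugates in a pairwise intersection killed by disjointness, and the same computation $\mathrm{cd}(\Z^{rn})=rn$ fed into Theorem \ref{thm:FO}. No gaps to report.
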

\begin{proof}
Let $f$ and $g$ be the tori in question. Define subgroups of $\pi=\pi_1(X)$ by $A_1=\mathrm{im}(f_*)$ and $A_i=\mathrm{im}(g_*)$ for $1<i\leq r$. For any $(g_1,\ldots, g_r)\in \pi^r$, we have 
\[\bigcap_{i=1}^rg_iA_ig_i^{-1}\subseteq g_1A_1g_1^{-1}\cap g_2A_2g_2^{-1}=\{1\}\] by disjointness. Applying Theorem \ref{thm:FO}, and using that $A_i\cong \Z^n$ by decomposability, it follows that 
\[\tc_r(X)\geq \mathrm{cd}\left(\bigsqcap_{i=1}^r A_i\right)=\mathrm{cd}(\Z^{rn})=rn,\]
as desired.
\end{proof}

We close with the simple observation that the homotopical notions are in fact strict weakenings of their homological cousins.

\begin{proposition}
Let $X$ be a topological space. 
\begin{enumerate}
\item If an $n$-torus in $X$ is homologically decomposable, then it is homotopically decomposable.
\item If two homologically decomposable $n$-tori in $X$ are homologically disjoint, then they are homotopically disjoint.
\end{enumerate}
\end{proposition}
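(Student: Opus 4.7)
The plan is to deduce both claims from the homological hypotheses by means of the Hurewicz homomorphism $h \colon \pi_1 \to H_1$, exploiting the fact that $\pi_1(T^n) \cong \Z^n$ is already abelian, so that $h$ is an isomorphism on the source. Naturality of $h$ with respect to the map $f \colon T^n \to X$ then lets me transport information freely between the two levels, and the fact that $H_1$ is abelian will absorb the conjugation that distinguishes homotopical disjointness from its homological analogue.

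For part (1), I would argue the contrapositive. Suppose $f \colon T^n \to X$ is not homotopically decomposable, and pick a nontrivial $\alpha \in \ker\bigl(f_* \colon \pi_1(T^n) \to \pi_1(X)\bigr)$. Naturality of $h$ gives $f_*(h(\alpha)) = h(f_*(\alpha)) = 0$ in $H_1(X)$. Because $h \colon \pi_1(T^n) \to H_1(T^n)$ is an isomorphism, $h(\alpha)$ is a nontrivial element in the kernel of $f_* \colon H_1(T^n) \to H_1(X)$, contradicting homological decomposability.

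For part (2), suppose for contradiction that two homologically decomposable and homologically disjoint tori $f,g$ fail to be homotopically disjoint. Then there exist $g_1,g_2 \in \pi_1(X)$ and a nontrivial element
\[x \in g_1\,\mathrm{im}(f_*)\,g_1^{-1} \cap g_2\,\mathrm{im}(g_*)\,g_2^{-1}.\]
Write $x = g_1 f_*(\alpha) g_1^{-1} = g_2 g_*(\beta) g_2^{-1}$ for some $\alpha,\beta \in \pi_1(T^n)$. Passing to $H_1(X)$, the conjugations become trivial and naturality of $h$ yields
\[h(x) = f_*(h(\alpha)) = g_*(h(\beta)),\]
so this class lies simultaneously in the images of the two $H_1$-level homomorphisms. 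Homological disjointness forces $h(x) = 0$, whereupon homological decomposability of each torus individually yields $h(\alpha) = 0 = h(\beta)$. Since $h$ is an isomorphism on $\pi_1(T^n)$, we conclude $\alpha = \beta = 1$, hence $x = 1$, a contradiction.

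There is no serious obstacle here; the entire argument is powered by the single observation that $\pi_1(T^n)$ is abelian. This is what makes Hurewicz detect injectivity on $\pi_1(T^n)$ and what makes the conjugations in the definition of homotopical disjointness irrelevant after passing to homology. The one piece of bookkeeping worth flagging is to keep the two reductions logically separate in part (2)—decomposability is used to promote the vanishing of $f_*(h(\alpha))$ to the vanishing of $h(\alpha)$, and disjointness is used only to arrange that vanishing in the first place.
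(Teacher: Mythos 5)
Your proof is correct and takes essentially the same approach as the paper: both rest on naturality of the Hurewicz homomorphism, the fact that it is an isomorphism on $\pi_1(T^n)\cong\Z^n$, and (for part (2)) the observation that conjugation becomes invisible in $H_1(X)$, so homological disjointness plus decomposability kill the offending element. The only differences are cosmetic---the paper phrases (1) as a direct diagram chase rather than a contrapositive, and in (2) normalizes so that a single conjugator suffices.
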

\begin{proof}
For the first claim, our assumption is that the bottom arrow is injective in the following commutative diagram of group homomorphisms:
\[\xymatrix{
\pi_1(T^n)\ar[d]_-\cong\ar[r]^-{f_*}&\pi_1(X)\ar[d]\\
H_1(T^n)\ar[r]^-{f_*}&H_1(X).
}\] It follows that either composite is injective, so the top arrow is injective, as desired.

For the second claim, we retain the notation of Corollary \ref{cor:homotopical tori} and suppose for the sake of contradiction that $f$ and $g$ are homologically decomposable, homologically disjoint, and \emph{not} homotopically disjoint. By the last assumption, there exist $a\in A_1$ and $h\in \pi$ such that $hah^{-1}\in A_2$. Since $a\equiv hah^{-1}\,\mathrm{mod} [\pi,\pi]$, the second assumption and the Hurewicz isomorphism imply that $a\in [\pi,\pi]$. Choosing a preimage $\tilde a\in \pi_1(T^n)$ of $a$, we conclude that $\tilde a\mapsto 0$ in $H_1(X)$, contradicting the first assumption.
\end{proof}

\section{Configuration spaces}\label{section:configuration spaces}

The goal of this section is to produce a family of tori in the configuration spaces of a graph $\graf$, to which we aim to apply the methods of the previous section. Our arguments are simple and geometric in nature; essentially, all follows from a single fact about the local behavior of configuration spaces of graphs, to which we now turn.

\subsection{Local flexibility} In this section, we discuss a homotopical principle, tied to the local geometry of a graph, which permits a simplified description of its configuration spaces. This simple but fundamental idea was first articulated in the pioneering work of \cite{Swiatkowski:EHDCSG}, as expanded and clarified in \cite{ChettihLuetgehetmann:HCSTL}. The version we give here, or one very close to it, was discovered by the authors of \cite{AgarwalBanksGadishMiyata:DCCSG} in their search for a purely topological description of certain algebraic constructions of \cite{AnDrummond-ColeKnudsen:SSGBG,AnDrummond-ColeKnudsen:ESHGBG}.

\begin{observation}\label{observation:edge}
On an edge, the exact positions of particles do not matter.
\end{observation}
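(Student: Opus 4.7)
The plan is to formalize the observation by modeling an edge $e$ as the open interval $(0,1)$ and exhibiting an explicit deformation retraction of the configuration space of $k$ particles on $e$ onto a discrete set of canonical positions. Concretely, the ordered configuration space $F_k((0,1))$ consists of $k$-tuples $(t_1,\ldots,t_k)$ of pairwise distinct points in $(0,1)$, and it decomposes as a disjoint union indexed by $\sigma\in\Sigma_k$, where the $\sigma$-component is the open simplex
\[
\Delta_\sigma=\{(t_1,\ldots,t_k):0<t_{\sigma(1)}<\cdots<t_{\sigma(k)}<1\}.
\]
Each such $\Delta_\sigma$ is convex, hence contractible, so $F_k((0,1))\simeq \Sigma_k$ (viewed as a discrete space of $k!$ points), and consequently $B_k((0,1))\simeq\mathrm{pt}$.

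The deformation retraction would be the straight-line homotopy to canonical uniformly spaced positions $t_{\sigma(i)}=i/(k+1)$ within each $\Delta_\sigma$. The only check needed is that a convex combination of two strictly ordered tuples remains strictly ordered, which is immediate from the linearity of the inequality defining $\Delta_\sigma$. This check is the sole technical content and is entirely routine.

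To promote the local edge-level statement into the form usable in subsequent sections, I would observe that for any configuration on $\graf$, restricting to a single edge $e$ and fixing both the positions of particles off $e$ and the discrete data specifying which particles lie on $e$ and in what order, the remaining degrees of freedom trace out precisely one such open simplex $\Delta_\sigma$. Straight-line motion within $\Delta_\sigma$ avoids collisions with each other and, because it stays in the interior of $e$, with particles elsewhere in $\graf$. Hence the sliding homotopy extends consistently over all edges to a global deformation that only remembers the cyclic/linear ordering of particles on each edge.

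The main obstacle, such as it is, lies not in the observation itself but in its global application: to build a cellular or discrete model for $F_k(\graf)$ or $B_k(\graf)$ one must also specify behavior at vertices and interpolate consistently between edges meeting at a shared endpoint. That globalization belongs to the subsequent development; for the purely local observation stated here, the convex-combination argument above suffices.
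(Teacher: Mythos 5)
Your proposal is correct and matches the paper's justification, which likewise identifies $F_k$ of an interval with a $k!$-fold disjoint union of open $k$-simplices, each contractible; your explicit convex/straight-line contraction is just a spelled-out version of the same argument. The remarks on globalizing across vertices are fine but, as you note, belong to the later development (Proposition \ref{prop:deformation retract}) rather than to this observation.
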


More precisely, the configuration space of $k$ labeled particles in the unit interval is homeomorphic to a $k!$-fold disjoint union of open $k$-simplices, each of which is contractible.

\begin{observation}\label{observation:vertex}
Near a vertex, only the relative position of the nearest particle matters.
\end{observation}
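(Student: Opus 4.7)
The plan is to upgrade the informal assertion to a strong deformation retraction statement. Let $v$ be a vertex of $\graf$ of valence $d$, and fix a neighborhood $U$ of $v$ homeomorphic to the star $\stargraph{d}$, with each edge identified with $[0,1)$ so that $v$ corresponds to $0$. I would then aim to produce a strong deformation retraction of $F_k(U)$ onto the subspace on which, for each edge, only the coordinate of the particle closest to $v$ on that edge is permitted to vary, while the remaining particles sit at prescribed standard positions on their edges. Once this is in hand, combining it with Observation \ref{observation:edge} applied to each edge of $\graf$ would give a global homotopical simplification of the configuration space.

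First I would stratify $F_k(U)$ by combinatorial type, where a type records, for each of the $d$ edges, the list of particle indices on that edge in order of increasing distance from $v$. On a stratum where edge $e$ carries the ordered list $i_1^e, \ldots, i_{j_e}^e$, the retract fixes the coordinate $t_{i_1^e}$ of the nearest particle and slides the remaining particles via an order-preserving linear interpolation toward chosen standard targets. Because the interpolation preserves the edgewise order at every intermediate time and particles on different edges never interact, the straight-line path is collision-free and remains in $F_k(U)$ throughout.

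The key step, and the main potential obstacle, is to ensure that the stratum-wise retractions assemble into a globally continuous map and homotopy. This reduces to compatibility along codimension-one stratum boundaries, where a particle crosses through $v$ from one edge to another. A naive choice of targets will typically fail here: when the transitioning particle leaves an edge, the remaining particles on that edge are re-classified --- the former second-nearest becomes the new nearest --- and its prescribed target may jump to its actual position discontinuously. The remedy is to let the target positions depend continuously on the location of the nearest particle to $v$, for instance by interpolating between the target schemes on either side of a transition as a smooth function of the minimum distance of a particle to $v$. With this refinement the glued retraction is well-defined and continuous, and the conclusion is that, up to homotopy, the only continuous datum retained near $v$ is the position of the nearest particle on each incident edge --- precisely the content of the informal observation.
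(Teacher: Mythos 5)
Your instinct to upgrade the observation to a (strong) deformation retraction is the right one, but the retraction you aim for is precisely what creates the difficulty, and the difficulty is not actually resolved. Pinning the non-nearest particles to prescribed standard positions makes the time-one map stratum-dependent, and the wall-crossing discontinuities are worse than you acknowledge: besides the reclassification on the edge being vacated, there is one on the \emph{receiving} edge, where the hitherto nearest (hence untouched) particle suddenly becomes second-nearest and acquires a standard-position target, so the retraction jumps from the identity to a nontrivial displacement of that particle as the crossing happens. Your proposed remedy---letting targets vary with the minimum distance of a particle to $v$---is only gestured at: a configuration with a particle at $v$ lies in the closure of strata from \emph{all} $d$ incident edges, so the interpolated scheme must agree with the limits from every side simultaneously; and once targets depend on the position of the nearest particle, the time-one map no longer restricts to the identity on the subspace you declared as the target, so you are in fact retracting onto some other, position-dependent subspace that you never specify. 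Verifying that such a scheme exists and is idempotent is exactly the content of the claim, and it is missing.

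The paper sidesteps all of this by not prescribing positions at all. Its precise incarnation of the observation, Proposition \ref{prop:deformation retract} (the ``gravity'' flow, via the cited result of \cite{AgarwalBanksGadishMiyata:DCCSG}), is a single globally defined outward flow whose target is the \emph{flexible} subspace of configurations with at most one particle in the open star of $v$; since no particle is sent to a designated spot, there is nothing to glue across strata, and the residual reduction ``positions on an edge don't matter'' is delegated to Observation \ref{observation:edge} afterwards rather than built into the retraction. Two further mismatches are worth noting: your target keeps one free particle on \emph{each} edge incident to $v$, which is more data than ``the nearest particle'' of the statement; and globally (closed edges, many particles) pushing particles away from $v$ requires room, hence the subdivision in Proposition \ref{prop:deformation retract}---an issue your half-open local model hides and your closing sentence about globalizing via Observation \ref{observation:edge} does not address.
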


To see why, imagine lifting the graph by holding this vertex, letting the adjacent edges dangle. Under the influence of gravity, nearby particles gradually slide down these edges until at most one is less than distance $1/2$ from the center. Aside from the position of this nearest particle, the remaining information is a tuple of labeled configurations in a closed interval, to which Observation \ref{observation:edge} applies.

We now give a precise formulation of the flexibility we have been describing. As a matter of notation, given a vertex $v$ of $\graf$, we write $F_k(\graf)_v\subseteq F_k(\graf)$ for the subspace of configurations in which at most one particle lies in the open star of $v$. Given a set of vertices $W$, we write $F_k(\graf)_W=\bigcap_{v\in W} F_k(\graf)_v$ (later we extend this notation in the obvious way to unordered configuration spaces).

\begin{proposition}\label{prop:deformation retract}
Let $W$ a set of vertices in the graph $\graf$. There is a subdivision $\graf'$ of $\graf$ such that the inclusion $F_k(\graf')_W\subseteq F_k(\graf')$ is an equivariant deformation retract for every $k\geq0$.
\end{proposition}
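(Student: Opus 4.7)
The plan is to realize the informal ``gravity'' picture of Observation \ref{observation:vertex} as an honest equivariant deformation retraction, by subdividing $\graf$ enough that near each $v \in W$ one can push surplus particles into a geometrically isolated ``parking strip,'' independently and simultaneously for all $v \in W$.

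For the subdivision, I would take $\graf'$ to be obtained from $\graf$ by dividing each edge into sufficiently many segments so that (a) the closed stars $\overline{\st}(v)$, as $v$ ranges over $W$, are pairwise disjoint in $\graf'$, and (b) each edge $e$ of $\graf'$ incident to some $v \in W$ is followed (across the subdivision vertex at its far end) by an edge $e'$ of $\graf'$ whose closure is disjoint from $\overline{\st}(w)$ for every $w \in W$. The interior of $e'$ then furnishes a parking strip for particles near $v$.

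Now fix $v \in W$ and parametrize each edge $e_j$ of $\graf'$ at $v$ as $[0,1]$ with $0$ at $v$, writing $e_j'$ for its parking-strip continuation. I would construct a homotopy $H^v_t$ on $F_k(\graf')$ supported in the two-edge neighborhood $\bigcup_j (e_j \cup e_j')$ of $v$, implementing the following informal recipe: the particle in $\st(v)$ nearest $v$ stays fixed, while every other particle in $\st(v)$ is slid outward along its edge into its parking strip, preserving relative order. The main technical obstacle is continuity at configurations where (i) a particle crosses the boundary of $\st(v)$ and the census of $\st(v)$ changes, or (ii) two particles tie for nearest to $v$. The fix is to define the outward velocity of each particle $p \in \st(v)$ as a product of smooth cutoff functions: one vanishing as the coordinate of $p$ approaches $1$ (so that the velocity dies as $p$ exits $\st(v)$), and one vanishing as the distance from $p$ to $v$ approaches the minimum such distance over all particles in $\st(v)$ (so that the nearest particle is automatically pinned, symmetrically in the case of ties). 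With this recipe, $H^v_t$ is continuous and restricts to the identity on $F_k(\graf')_v$.

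Because $H^v_t$ alters only particles in the two-edge neighborhood of $v$, and property (b) of the subdivision makes these neighborhoods essentially disjoint as $v$ ranges over $W$, the combined flow $H_t = \prod_{v \in W} H^v_t$ is well-defined and yields a deformation retraction of $F_k(\graf')$ onto $\bigcap_{v \in W} F_k(\graf')_v = F_k(\graf')_W$. Equivariance under $\Sigma_k$ is automatic, since the recipe depends only on the underlying set of positions rather than on the particle labels. The hardest step is the continuous, symmetric definition of the local flow; once that is in place, the global assembly is formal.
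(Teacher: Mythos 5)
Your global scheme---subdivide so that the relevant neighborhoods of the vertices of $W$ are disjoint with spare room beyond them, retract locally at each $v\in W$, assemble by disjointness of supports, and observe that the subdivision is independent of $k$---is the right one, and it is the honest version of the gravity heuristic. (For calibration: the paper does not prove this proposition from scratch; it derives it from \cite[Lem.~2.0.1]{AgarwalBanksGadishMiyata:DCCSG}, see also \cite[Prop.~2.9]{Knudsen:OSTCGGBG}, emphasizing only the independence of the subdivision from $k$, which your construction respects.) The genuine gap is in your local flow. Your rule pins the particle nearest $v$ and, ``symmetrically in the case of ties,'' pins every particle tied for nearest. Take a configuration with two particles on different edges at $v$ at equal, minimal distance from $v$: both have zero velocity, both remain tied and nearest for all time, so $H^v_1$ leaves two particles in the open star and does not land in $F_k(\graf')_v$. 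This is not a removable technicality: any recipe in which ``the nearest particle stays where it is'' is forced by continuity (approach the tie through configurations where either particle is strictly nearest) to fix both tied particles, and hence cannot terminate in the target subspace; with your speed cutoff the failure is even robust, since a particle starting just barely farther than the minimum has speed tending to zero and need not escape by time $1$. Two further problems compound this: the first cutoff, which makes the velocity die ``as $p$ exits $\st(v)$,'' prevents the surplus particles from ever leaving the open star in finite time; and particles already sitting at the far endpoint of $e_j$ or inside the parking strip $e_j'$ are assigned no velocity, so the pushed particles either collide with them or stall---the conveyor-style compression of the particles ahead has to be built into the formula, not just suggested by ``preserving relative order.''

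The repair is to give up pinning altogether: push \emph{every} particle in the closed star of $v$ outward by rescaling its distance to $v$, simultaneously compressing the particles ahead of it into the remainder of the subdivided edge and the parking strip (an interval absorbs any finite number of distinct points, which is exactly why the subdivision can be chosen independently of $k$), and stop the flow as soon as at most one particle remains in the open star. At a tie the two nearest particles simply exit together; this is permitted because $F_k(\graf')_v$ requires \emph{at most} one particle in the star, not exactly one, and no distinguished choice of particle is ever made, so continuity is automatic. The phenomenon is already visible for $F_2(\stargraph{3})$: for two particles on distinct edges the local model is $[0,1]^2\setminus\{(0,0)\}$ with target $\{s=1\}\cup\{t=1\}$, and the correct retraction is radial projection away from the deleted corner, not ``freeze the smaller coordinate and move the other.'' With such a local flow in place, your subdivision conditions (a) and (b), the disjoint-support assembly over $W$, the identity on $F_k(\graf')_W$, and the $\Sigma_k$-equivariance all go through as you wrote them.
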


This result is essentially immediate from \cite[Lem. 2.0.1]{AgarwalBanksGadishMiyata:DCCSG} (see also \cite[Prop. 2.9]{Knudsen:OSTCGBG}). We emphasize that the level of subdivision required to invoke Proposition \ref{prop:deformation retract} is independent of $k$.

\begin{remark}
The heuristic of particles moving in $\graf$ under the influence of gravity can be made rigorous using discrete Morse flows \cite{FarleySabalka:DMTGBG}. This approach, premised on the pioneering work of \cite{Abrams:CSBGG}, requires arbitrarily fine subdivision, with the number of vertices growing linearly in $k$, resulting in fearsome combinatorial difficulties. Nevertheless, it has been quite fruitful \cite{FarleySabalka:OCRTBG, FarleySabalka:PGBG,Sabalka:ORIPTBG,KimKoPark:GBGRAAG,KoPark:CGBG,MaciazekSawicki:HGPOCG,Ramos:SPHTBG,AnMaciazek:GPBGPG,GonzalezHoekstra:CTBTBG}.
\end{remark}

\begin{remark}
Taking Observations \ref{observation:edge} and \ref{observation:vertex} to their logical conclusion, \cite{Swiatkowski:EHDCSG} shows that each configuration space of a graph deformation retracts onto a cubical complex in which a cell describes a distribution of particles on edges, together with the information of whether each vertex has a particle approaching it. This complex has proven to be a highly effective conceptual and computational tool \cite{ChettihLuetgehetmann:HCSTL,AnDrummond-ColeKnudsen:ESHGBG,AnDrummond-ColeKnudsen:AHGBG,AnKnudsen:OSHPGBG,ProudfootRamos:CCG}.
\end{remark}

\subsection{Stars and tori}\label{section:stars} This section is devoted to what is both the simplest and the most fundamental example of a graph that is not a manifold, namely the star graph $\stargraph{3}$. In general, we write $\stargraph{n}$ for the cone on the set $\{1,\ldots, n\}$ with its canonical cell structure. Essentially everything we do here will be premised on the following example, whose fundamental importance has long been recognized \cite{Abrams:CSBGG,Ghrist:CSBGGR}.

\begin{example}
Writing $v_0$ for the central vertex of the star graph $\stargraph{3}$, one checks by hand that $F_2(\stargraph{3})_{v_0}$ is a six-pointed ``star of David'' in which each edge represents one particle moving along an edge with the second particle stationary at a univalent vertex. Parametrizing this space piecewise linearly counterclockwise, we obtain the canonical homotopy equivalence $S^1\xrightarrow{\simeq} F_2(\stargraph{3})$ (in this case, there is no need for subdivision). By inspection, this map is $\Sigma_2$-equivariant for the antipodal action on the source, so it descends to a homotopy equivalence $S^1\xrightarrow{\simeq} B_2(\stargraph{3})$.
\end{example}

The cycle of this example describes a situation in which two particles orbit each other by taking turns passing through the central vertex. Performing this move locally at an essential vertex of a larger graph $\graf$, and leaving the remaining $k-2$ particles stationary on some collection of edges, we obtain a cycle in the configuration space of $k$ points in $\graf$. 

\begin{definition}\label{def:star class}
A \emph{star class} in $\graf$ is the image of the fundamental class of $S^1$ under the homomorphism induced by a composite of the form
\[S^1\xrightarrow{\simeq} B_2(\stargraph{3})\cong B_2(\stargraph{3})\times B_1(\stargraph{0})^{k-2}\subseteq B_k(\stargraph{3}\sqcup\stargraph{0}^{\sqcup k-2})\xrightarrow{B_k(\varphi)}B_k(\graf)\] where $\varphi$ is a topological embedding.
\end{definition}

Iterating this construction over multiple vertices, we obtain a rich family of toric homology classes.

\begin{definition}\label{def:W-torus}
Let $W$ be a set of essential vertices. A $W$-\emph{torus} in $\graf$ is the image of the fundamental class of $(S^1)^W$ under the homomorphism induced by a composite of the form
{\small\[
(S^1)^W\xrightarrow{\simeq} B_2(\stargraph{3})^W\cong B_2(\stargraph{3})^W\times B_1(\stargraph{0})^{k-2|W|}\subseteq B_k(\stargraph{3}^{\sqcup W}\sqcup\stargraph{0}^{\sqcup k-2|W|})\xrightarrow{B_k(\varphi)}B_k(\graf)\]}where $\varphi$ is a topological embedding. If $W$ is the set of all essential vertices, we say the torus is \emph{maximal}.
\end{definition}

These constructions apply equally to ordered configuration spaces, the only difference being that we must specify \emph{which} pairs of particles orbit one another at which vertices. Our terminology will mostly fail to distinguish between the ordered and unordered cases, and most statements are valid for both. In particular, the following fundamental result holds in both settings.

\begin{proposition}\label{prop:torus nonzero}
A maximal torus has infinite order.
\end{proposition}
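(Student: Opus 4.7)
My plan is to construct, for each essential vertex $v \in W$, a cohomology class $\alpha_v \in H^1(B_k(\graf); \Z)$ whose cup product $\bigsqcap_{v \in W} \alpha_v$ pairs non-trivially with the maximal torus class. Since $H^{|W|}((S^1)^W)$ is the top exterior power of $H^1((S^1)^W)$ by K\"unneth, this will show that $f$ sends the fundamental class of $(S^1)^W$ to a non-torsion element of $H_{|W|}(B_k(\graf))$, proving the proposition. First, I would invoke Proposition \ref{prop:deformation retract} for $W$ equal to the full set of essential vertices, replacing $B_k(\graf)$ by its deformation retract $B_k(\graf')_W$ for a suitable subdivision $\graf'$. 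By choosing the embedding $\varphi$ in Definition \ref{def:W-torus} so that the images of the star factors lie in the open stars of the corresponding essential vertices (and the isolated points lie away from them), the maximal torus factors through $B_k(\graf')_W$.

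Next, for each $v \in W$, I would construct a continuous map $\phi_v\colon B_k(\graf')_W \to S^1$ whose composition with the $v$-th factor inclusion
\[
S^1 \xrightarrow{\simeq} B_2(\stargraph{3}) \hookrightarrow B_k(\graf')_W
\]
is a homotopy equivalence, and whose composition with any other factor inclusion is null-homotopic (the particle near $v$ remaining stationary during a star cycle at $w \neq v$). Setting $\alpha_v := \phi_v^*(x)$ for $x$ the generator of $H^1(S^1)$, naturality yields $f^*\alpha_v = \pm x_v$, the $v$-th standard generator of $H^1((S^1)^W)$. By K\"unneth, $f^*\bigsqcap_{v \in W}\alpha_v = \pm \bigsqcap_{v \in W} x_v$ is the top generator of $H^{|W|}((S^1)^W)$; pairing with the fundamental class of $(S^1)^W$ produces $\pm 1$, which equals the pairing of $\bigsqcap_v \alpha_v$ with $f_*[(S^1)^W]$. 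Hence $f_*[(S^1)^W]$ has infinite order.

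The central technical obstacle is the construction of the maps $\phi_v$, which must encode the local homotopy equivalence $B_2(\stargraph{3}) \simeq S^1$ globally while continuously accommodating every configuration in $B_k(\graf')_W$---including those where the particle near $v$ lies on a half-edge not participating in the chosen star embedding, or where no particle is near $v$ at all. One natural route is to describe $\phi_v$ cellularly in the \'Swi\k{a}tkowski cubical model, where $\alpha_v$ can be realized as an explicit $1$-cocycle counting signed cyclic transitions of particles through $v$ among the three chosen half-edges; checking cocycle-ness then reduces to verifying cancellation on the square $2$-cells coming from disjoint simultaneous motions, which commute with transitions at $v$. A more direct geometric alternative is to fix a slightly enlarged neighborhood $N_v$ of $v$ containing the relevant portion of the star embedding, observe that the $v$-star cycle takes place within $B_{\leq 2}(N_v)$, and define $\phi_v$ via a quotient map to a hexagonal circle assembled from the three chosen half-edges, with everything outside collapsed to a basepoint.
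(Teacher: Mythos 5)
Your plan asks for more than is true: producing integral classes $\alpha_v\in H^1$ with $f^*\alpha_v=\pm x_v$ (equivalently, maps $\phi_v$ of degree $\pm1$ on the $v$-th circle factor and null on the others) would show that each individual circle factor of the maximal torus has infinite order in $H_1$ and that the torus is homologically decomposable, and the paper shows this fails in general. By the $\theta$-relation (Proposition \ref{prop:theta}), the star classes at the two vertices of $\thetagraph{3}$ coincide in $H_1(B_2(\thetagraph{3}))$, so no $\phi_v$ can take value $\pm1$ on one circle factor and $0$ on the other; worse, by Proposition \ref{prop:planarity} and the remark following it, in $\completegraph{3,3}$ the ordered star classes are actually zero in $H_1$ and the unordered ones are $2$-torsion, and an integral cohomology class pairs trivially with any torsion class, so no $\alpha_v$ as required can exist. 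Both of your fallback constructions hit exactly this wall: collapsing everything outside a neighborhood of $v$ to a basepoint produces the local graph $\stargraph{d(v)}/\partial$, and by Example \ref{example:indiscrete} the star cycle maps to a commutator there, hence to \emph{zero} in $H_1$ of the local model; and a global $1$-cocycle counting signed transitions through $v$ among three chosen half-edges cannot be consistent, since homologous cycles (as in an embedded $\thetagraph{3}$) would have to receive different values. Factor-by-factor homological detection is precisely the strategy that works only under extra hypotheses (trees, articulations, planarity, Section \ref{section:planar}), which is why the paper later abandons it for $\pi_1$ in Section \ref{section:homotopical strategies}.

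The paper's proof of Proposition \ref{prop:torus nonzero} instead detects the top class all at once and, crucially, does not throw away the particles far from the essential vertices. After reducing to the unordered case and applying Proposition \ref{prop:deformation retract}, it iterates the vertex-explosion cofiber sequence over all essential vertices $W$ to get a homomorphism from $H_m(B_k(\graf))$ to $H_0(B_{k-m}(\graf\setminus W))\otimes\bigotimes_{v\in W}H_1(\stargraph{d(v)}/\partial\stargraph{d(v)})$, i.e., into a free module over the polynomial ring $\Z[\pi_0(\graf\setminus W)]$, and computes the image of the maximal torus explicitly: its coefficient on the basis element $\prod_{v}h^v_3$ is the monomial $e_0^{k-2m}\prod_{v\in W}(e_2^v-e_1^v)$, which is nonzero because at each $v$ two of the three chosen edges lie in distinct components of $\graf\setminus W$ (this is where maximality of $W$ is used) and the polynomial ring is an integral domain. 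The bookkeeping of which component of $\graf\setminus W$ each stationary particle and each local edge lies in is exactly the information your basepoint-collapse discards, and it is what allows the top-degree class to survive even when every individual star class is torsion or zero. To repair your argument you would need either to restrict the class of graphs or to replace the cup-product pairing by this coefficient-module computation.
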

\begin{proof}
It suffices to prove the claim for unordered tori, since the image of an ordered $W$-torus under the homomorphism induced by the projection $F_k(\graf)\to B_k(\graf)$ differs from the corresponding unordered $W$-torus by a factor of $2^{|W|}$. Choosing a maximal torus and setting $m=m(\graf)$, we assume for simplicity that the $k-2m$ stationary particles are located on a single edge (it does not affect the argument). We may also assume after subdividing that the conclusion of Proposition \ref{prop:deformation retract} holds. Writing $\graf_v$ for the complement of the open star of $v$, we have the cofiber sequence
\[\xymatrix{
B_k(\graf_v)\ar[r]& B_k(\graf)_v\ar[r]& B_{k-1}(\graf_v)_+\wedge\stargraph{d(v)}/\partial \stargraph{d(v),}
}\]where the righthand map is defined by recording separately the positions of particles lying inside and outside of the open star at $v$. Iterating this construction over the set $W$ of essential vertices, appealing to Proposition \ref{prop:deformation retract}, applying homology, invoking the K\"{u}nneth isomorphism, and choosing an ordering of the edges incident on each $v\in W$, we obtain a homomorphism
\[\xymatrix{
H_{m}(B_k(\graf))\ar[r]& H_0(B_{k-m}(\graf\setminus W))\otimes \displaystyle\left(\bigotimes_{v\in W} H_1(\stargraph{d(v)}/\partial\stargraph{d(v)})\right).
}\] The target being free Abelian, it suffices to show that our maximal torus does not lie in the kernel of this homomorphism.

To attack this question, we choose the basis $H_1(\stargraph{d(v)}/\partial\stargraph{d(v)})\cong\Z\langle h^v_{2},\ldots, h^v_{d(v)}\rangle$, where $h^v_i$ is represented by the loop that travels from the basepoint to the central vertex along the $1$st edge and back to the basepoint along the $i$th edge. By Observation \ref{observation:edge} and simple counting, we further recognize $H_0(B_{k-m}(\graf\setminus W))$ as the degree $k-m$ summand of the polynomial ring $\Z[\pi_0(\graf\setminus W)]$. Thus, we may regard the image of the homomorphism in question as lying in the free $\Z[\pi_0(\graf\setminus W)]$-module generated by the set $\bigsqcap_{v\in W}\{h_i^v\}_{i=2}^{d(v)}$. 

We may assume without loss of generality that our ordering of the edges at each vertex begins with the three edges distinguished by the embedding of $\stargraph{3}$ at that vertex, in order. One then checks directly---by reducing to the case $\graf=\stargraph{3}$, for example---that the image of our maximal torus under the above homomorphism is represented in this basis by the expression
\[e_0^{k-2m}\bigsqcap_{v\in W}\left((e_3^v-e_1^v)h_{2}^v+(e_2^v-e_1^v)h_{3}^v)\right),\] where $e_i^v\in \pi_0(\graf\setminus W)$ is the component of the $i$th edge at $v$ and $e_0$ is the component of the edge containing the $k-2m$ stationary particles. Perhaps after relabeling, we may assume that $e_1^v\neq e_2^v$ for every essential vertex $v\in W$---here, we use our assumption that $W$ is the set of \emph{all} essential vertices---in which case the monomial $e_0^{k-2m}\bigsqcap_{v\in W}(e_2^v-e_1^v)$ is nonzero, since the polynomial ring is an integral domain. Since this monomial is the coefficient of the basis element $\bigsqcap_{v\in W} h^v_3$ in the above expression, the claim follows.
\end{proof}

\begin{remark}
The key property used in this argument is that, at each vertex in $W$, two edges involved in the $W$-torus lie in different components of $\graf\setminus W$. This observation leads to the concept of \emph{rigidity} and a stronger version of Proposition \ref{prop:torus nonzero} for rigid tori---see \cite{AnDrummond-ColeKnudsen:AHGBG}, which proves that, asymptotically in $k$, all homological phenomena are dominated by rigid tori, at least in the unordered case.
\end{remark}

\begin{remark}
The cofiber sequence used in the proof of Proposition \ref{prop:torus nonzero} gives rise to a long exact sequence in homology, known as the \emph{vertex explosion exact sequence} \cite{AnDrummond-ColeKnudsen:ESHGBG}. This exact sequence is arguably the single most powerful tool in studying the homology of configuration spaces of graphs.
\end{remark}

\begin{remark}
The appearance of the polynomial ring above is not as fanciful as it may seem; indeed, the homology of the configuration spaces of $\graf$ forms a bigraded module over the polynomial ring generated by the edges of $\graf$ \cite{AnDrummond-ColeKnudsen:ESHGBG}.
\end{remark}

\subsection{Universal upper bounds} In this section, we use local flexibility to give easy derivations of the fundamental results of {\'{S}}wi\k{a}tkowski \cite{Swiatkowski:EHDCSG}.\footnote{Strictly speaking, {\'{S}}wi\k{a}tkowski studies the a priori stronger homotopical dimension---see also \cite{Ghrist:CSBGGR,FarleySabalka:DMTGBG}.}

\begin{theorem}\label{thm:upper bound}
For any connected graph $\graf$ and $k\geq0$, we have $\cat(B_k(\graf))\leq m(\graf)$.
\end{theorem}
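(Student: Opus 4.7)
The plan is to exhibit $B_k(\graf)$ as homotopy equivalent to a CW complex of dimension at most $m(\graf)$, whence the desired bound follows from the standard inequality $\cat(X)\leq \dim(X)$ for CW complexes (valid in the reduced convention adopted here). This line of argument in fact yields the a priori stronger bound on homotopical dimension referenced in the footnote and is essentially due to \'{S}wi\k{a}tkowski.

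First I would apply Proposition \ref{prop:deformation retract} with $W$ equal to the set of all essential vertices of $\graf$, producing a subdivision $\graf'$ such that the inclusion $B_k(\graf')_W\subseteq B_k(\graf')$ is a deformation retract. Since $\graf$ and $\graf'$ are canonically homeomorphic, this reduces the problem to bounding the dimension of a CW model for $B_k(\graf')_W$, a subspace in which each essential vertex carries at most one nearby particle. I would then stratify $B_k(\graf')_W$ according to the data of (a) a subset $T\subseteq W$ recording which essential vertices are currently approached, and (b) for each $v\in T$, the open edge at $v$ along which this nearby particle sits. By Observation \ref{observation:edge}, the portion of the configuration lying away from the open stars of $W$ contributes a purely discrete moduli--essentially a distribution of the remaining $k-|T|$ particles among edges--while by Observation \ref{observation:vertex}, the position of the approaching particle at each $v\in T$ contributes a single free coordinate (its signed distance from $v$). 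Each stratum is thus a disjoint union of open cubes of dimension $|T|\leq |W|=m(\graf)$.

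Assembling these strata into a bona fide CW structure yields the required $m(\graf)$-dimensional model--in essence the \'{S}wi\k{a}tkowski complex mentioned in the final remark of Section \ref{section:stars}--from which the theorem follows. The main technical obstacle, should one insist on a fully self-contained exposition, is the assembly step: verifying that the cell attachments behave continuously and that local stratifications glue correctly across common faces. This is the content of \cite{Swiatkowski:EHDCSG}, with a streamlined modern treatment available in \cite{AgarwalBanksGadishMiyata:DCCSG,ChettihLuetgehetmann:HCSTL}. Alternatively, one can proceed by induction on $|W|$ using iterated applications of the vertex-explosion cofiber sequence from the proof of Proposition \ref{prop:torus nonzero}; each iteration introduces at most one new dimension (the suspension coordinate carried by $\stargraph{d(v)}/\partial\stargraph{d(v)}$, which is one-dimensional), so after $m(\graf)$ iterations one recovers $B_k(\graf)$ from pieces of dimension at most $m(\graf)$.
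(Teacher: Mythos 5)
Your route is genuinely different from the paper's. You reduce to the statement that $B_k(\graf)$ has the homotopy type of a CW complex of dimension at most $m(\graf)$ (\'{S}wi\k{a}tkowski's cubical model) and then invoke $\cat\leq\dim$ for connected complexes; the paper deliberately avoids constructing any such model. Instead, it uses the same stratification you describe---filtering $B_k(\graf')_W$ by the number $p$ of particles in open stars of essential vertices---but extracts a categorical open cover directly: the top stratum $X_p\setminus X_{p-1}$ is a disjoint union of contractible pieces, hence includes nullhomotopically, while an open thickening $Y_p$ deformation retracts onto $X_{p-1}$, giving $\cat(X_p)\leq\cat(X_{p-1})+1$ by induction. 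The payoff of the paper's argument is that it is self-contained modulo Proposition \ref{prop:deformation retract}; the payoff of yours is the stronger conclusion on homotopical dimension, but only at the price of importing the hardest part from \cite{Swiatkowski:EHDCSG}.

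If you intend your sketch to stand on its own, there is a genuine gap exactly at the step you flag as ``assembly,'' and it is worth being precise about why. Your claim that each stratum of $B_k(\graf')_W$ is a disjoint union of open cubes of dimension $|T|$ is false as a statement about subspaces of the configuration space: the particles sitting on edges away from the stars of $W$ contribute genuine coordinates (open simplices, by Observation \ref{observation:edge} they are \emph{contractible}, not zero-dimensional), so every nonempty stratum has dimension $k$. The dimension count $|T|\leq m(\graf)$ is correct only for the cells of the collapsed model obtained by crushing these edge directions coherently, and producing that collapse---compatibly across closures of strata, and not forgetting the configurations with a particle sitting exactly at an essential vertex, which form the ``blocked'' faces---is precisely the content of \'{S}wi\k{a}tkowski's theorem rather than a routine verification. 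Your alternative suggestion via iterating the vertex-explosion cofiber sequence has a similar issue: knowing that each subquotient of the filtration is a wedge of $p$-spheres controls homology, but by itself it neither bounds the homotopical dimension nor $\cat$; one would still need a cone-length or cell-attachment argument, which is essentially what the paper's induction supplies. (A minor additional point: $\cat(X)\leq\dim(X)$ requires $X$ connected, which does hold here for connected $\graf$ but should be said.) So: as a citation of \cite{Swiatkowski:EHDCSG} your proof is correct but reproduces the classical argument the paper is trying to circumvent; as a self-contained argument it is incomplete at the decisive step.
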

\begin{proof}
Writing $W$ for the set of essential vertices of $\graf$, we may assume after subdivision that $\graf$ satisfies the hypotheses of Proposition \ref{prop:deformation retract}. Given $0\leq p\leq m(\graf)$, write $X_p\subseteq B_k(\graf)_W$ for the subspace of configurations in which at most $p$ particles lie in the open star of $W$. It suffices to show that $\cat(X_p)\leq p$ for $p>0$; indeed, the case $p=m(\graf)$ is equivalent to the claim by homotopy invariance and Proposition \ref{prop:deformation retract}.

We proceed by induction on $p$. For the induction step, write $Y_p\subseteq X_p$ for the subspace of configurations in which no more than $p-1$ particles are located at members of $W$. The subspaces $Y_p$ and $X_p\setminus X_{p-1}$ are open and together cover $X_p$. Using the pasting lemma indexed over subsets of $W$ of cardinality $p-1$, it is easy to show that $Y_p$ deformation retracts onto $X_{p-1}$, while $X_p\setminus X_{p-1}$ is homotopy equivalent to a disjoint union of products of configuration spaces of intervals. Since the latter are all contractible, it follows that the inclusion $X_p\setminus X_{p-1}\subseteq X_p$ is nullhomotopic, so $\cat(X_p)=\cat(Y_p)+1=p$ by induction.

For the base case $p=1$, we need only note that $X_0$ is now also a disjoint union of products of configuration spaces of intervals, so $\{X_1\setminus X_0, Y_1\}$ is a categorical open cover.
\end{proof}

\begin{remark}
Generalizing an observation made in the proof of Proposition \ref{prop:torus nonzero}, we may identify the associated graded object for this filtration as
\[X_p/X_{p-1}\cong B_{k-p}(\graf\setminus W)_+\wedge \bigvee_{S\subseteq W: |S|=p}\bigwedge_{v\in S} \stargraph{d(v)}/\partial\stargraph{d(v)},\] which has the homotopy type of a bouquet of $p$-spheres by Observation \ref{observation:edge}. Thus, as in the case of the skeletal filtration of a CW complex, the resulting spectral sequence in homology is simply a chain complex. This comparison is more than an analogy; indeed, the skeletal filtration of {\'{S}}wi\k{a}tkowski's cubical complex is homotopy equivalent to the filtration considered here.
\end{remark}

In order to proceed, we require what is perhaps the most fundamental result on configuration spaces of graphs.

\begin{theorem}\label{thm:asphericity}
Configuration spaces of graphs are aspherical.
\end{theorem}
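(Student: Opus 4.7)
The plan is to exhibit a nonpositively curved cube complex model of $F_k(\graf)$ and then invoke the Cartan--Hadamard theorem. First I would pass to a sufficient subdivision $\graf'$ of $\graf$ and apply Proposition \ref{prop:deformation retract} with $W$ the set of all essential vertices, reducing the problem to the homotopy equivalent subspace $F_k(\graf')_W$ of configurations in which at most one particle occupies the open star of any essential vertex. Next I would further deformation retract this subspace onto a cube complex $C$---essentially the {\'{S}}wi\k{a}tkowski complex alluded to in the previous remark (equivalently, Abrams' discretized configuration space after sufficient subdivision)---whose cells record a distribution of particles on edges together with, at each essential vertex $v$, either the assertion that no particle is approaching $v$ or the choice of an incident edge along which a single particle is sliding toward $v$.

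The main technical work is verifying Gromov's link condition: at every vertex of $C$, the link should be a flag simplicial complex. Vertices of the link correspond to single-coordinate moves out of a cube vertex---either toggling the ``approaching'' state at some essential vertex in $W$, or sliding a particle onto or off of some edge---and a simplex records a collection of such moves that can be performed simultaneously without collision. Because the subdivision is fine enough, two such moves are compatible precisely when their underlying supports in $\graf'$ are disjoint; since disjointness is a pairwise condition, any pairwise compatible family is jointly compatible, which yields flagness.

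Once local CAT(0) is established, the Cartan--Hadamard theorem implies that the universal cover of $C$ is contractible, so $C$ is a $K(\pi,1)$ and hence so is $F_k(\graf)$. The unordered case follows because $\Sigma_k$ acts freely on $F_k(\graf)$, exhibiting $B_k(\graf)$ as the quotient of an aspherical space by a free action of a discrete group, which is therefore aspherical as well. The principal obstacle is the link condition itself: the combinatorics become delicate when distinct essential vertices lie close together or when edges are short, which is precisely why the sufficient-subdivision hypothesis is indispensable upstream and why this step, despite its clean statement, absorbs most of the effort in the classical treatments.
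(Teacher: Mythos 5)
Your proposal is correct in outline and follows essentially the same route as the paper, which does not prove Theorem \ref{thm:asphericity} directly but instead cites Abrams and {\'{S}}wi\k{a}tkowski, whose arguments are precisely the nonpositively curved cube complex / flag-link / Cartan--Hadamard strategy you sketch (with the unordered case handled either by the free $\Sigma_k$-action or directly by the unordered cube model). The only caveat is that {\'{S}}wi\k{a}tkowski's complex and Abrams' discretized configuration space are not literally interchangeable---Abrams' model requires subdivision growing with $k$, unlike the $k$-independent subdivision of Proposition \ref{prop:deformation retract}---but either model supports the link-condition verification that, as you rightly note, carries the real technical weight.
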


\begin{remark}
More precisely, \cite{Abrams:CSBGG} shows that the fundamental groups of configuration spaces of graphs are graphs of groups, and both \cite{Abrams:CSBGG} and \cite{Swiatkowski:EHDCSG} show that the configuration spaces themselves are homotopy equivalent to CAT(0) cube complexes. Either result implies asphericity.
\end{remark}

\begin{remark}
Theorem \ref{thm:asphericity} is essentially the only result about configuration spaces of graphs that the author does not yet know how to deduce from Proposition \ref{prop:deformation retract}. He invites the reader to do so.
\end{remark}

\begin{corollary}\label{cor:cat}
For any $k\geq 2m(\graf)$, we have $\cat(B_k(\graf))=\cat(F_k(\graf))=m(\graf)$.
\end{corollary}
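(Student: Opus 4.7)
The plan is to match the upper bound from Theorem \ref{thm:upper bound} with a lower bound derived by applying Proposition \ref{prop:decomposable}(1) to a maximal torus, then transfer the resulting equality from $B_k(\graf)$ to $F_k(\graf)$.

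Setting $m=m(\graf)$, the hypothesis $k\geq 2m$ licenses the construction of Definition \ref{def:W-torus} with $W$ the set of all $m$ essential vertices, producing a maximal torus $f\colon T^m\to B_k(\graf)$. Proposition \ref{prop:torus nonzero} guarantees that $f_*[T^m]$ has infinite order in $H_m(B_k(\graf))$. To invoke Proposition \ref{prop:decomposable}(1), I must upgrade this to homological decomposability, i.e., injectivity of $f_*$ on $H_1$. Equivalently, writing $c_v=f_*[S^1_v]\in H_1(B_k(\graf))$ for the star-class at each essential vertex $v$, I need the family $\{c_v\}_{v\in W}$ to be linearly independent. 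This can be extracted from the machinery of Proposition \ref{prop:torus nonzero}: the iterated-cofiber calculation given there expresses the image of the maximal torus as an explicit monomial in the variables $h_i^v$ over the polynomial ring $\Z[\pi_0(\graf\setminus W)]$, and the non-triviality of this monomial --- secured precisely because $W$ is the set of \emph{all} essential vertices, so distinct edges at each $v$ lie in distinct components of $\graf\setminus W$ after relabeling --- forces the $c_v$ to be independent. Proposition \ref{prop:decomposable}(1) then yields $\cat(B_k(\graf))\geq m$, and combined with Theorem \ref{thm:upper bound} gives $\cat(B_k(\graf))=m$.

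For $F_k(\graf)$, the cleanest route is to repeat the above argument in the ordered version of Definition \ref{def:W-torus}: one specifies which pair of labeled particles orbits at each essential vertex, Proposition \ref{prop:torus nonzero}'s proof was written to cover both settings, and the filtration argument of Theorem \ref{thm:upper bound} goes through verbatim with $F_k$ in place of $B_k$. Alternatively, one may appeal to Theorem \ref{thm:asphericity}: the covering $F_k(\graf)\to B_k(\graf)$ is finite with deck group $\Sigma_k$, so $\pi_1(F_k(\graf))$ is a finite-index subgroup of $\pi_1(B_k(\graf))$ of equal cohomological dimension, and the two categories agree in this aspherical setting.

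The main delicate step is promoting the $H_m$-nonvanishing supplied by Proposition \ref{prop:torus nonzero} to injectivity on $H_1$. The naive rank argument ``$H_*(T^m)\cong \Lambda^*H_1(T^m)$, so nonzero image in top degree forces full rank on $H_1$'' fails in general, as illustrated by the quotient $T^2\to S^2$, because $f_*$ is not a ring map when the target lacks H-space structure. What rescues the argument is the explicit monomial distinguishability of the star-classes in the polynomial ring $\Z[\pi_0(\graf\setminus W)]$ recorded by the proof of Proposition \ref{prop:torus nonzero} --- a geometric input strictly stronger than bare nonvanishing in $H_m$.
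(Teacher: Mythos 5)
Your matching strategy (upper bound from Theorem \ref{thm:upper bound}, lower bound from a maximal torus) is the right shape, but the lower-bound step has a genuine gap: the passage from Proposition \ref{prop:torus nonzero} to homological decomposability is not valid, and the intermediate statement you want is actually false for some graphs covered by Corollary \ref{cor:cat}. The computation in the proof of Proposition \ref{prop:torus nonzero} lives entirely in degree $m$: it exhibits a homomorphism out of $H_m(B_k(\graf))$ that does not annihilate the image of the fundamental class. It certifies nothing in degree one. The natural degree-one shadow of that argument---projecting a star cycle to the first associated graded piece of the filtration by the number of particles in open stars---is only well defined modulo boundaries coming from filtration two, i.e.\ from configurations with particles simultaneously approaching two essential vertices, and that is exactly the source of the $\theta$-relation of Proposition \ref{prop:theta}. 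Concretely, for $\graf=\thetagraph{3}$ the star classes at the two essential vertices coincide, so the circle factors of a maximal torus can have overlapping images in $H_1$ and $f_*$ need not be injective there; in the ordered non-planar case star classes can even be torsion or zero (Proposition \ref{prop:planarity} and the remark following it). This is precisely the obstruction described in Section \ref{section:obstacles}, and it is why the homological strategy never reached the general case. Since Corollary \ref{cor:cat} is asserted for \emph{every} connected graph, including $\thetagraph{3}$ and non-planar graphs, its proof cannot run through Proposition \ref{prop:decomposable}(1) applied to a maximal torus.

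The repair is the route you mention only as an aside for $F_k(\graf)$: asphericity. By Theorem \ref{thm:asphericity} and Eilenberg--Ganea, $\cat(B_k(\graf))$ coincides with the cohomological dimension of its fundamental group, equivalently of the space itself; Proposition \ref{prop:torus nonzero} gives $H_{m(\graf)}(B_k(\graf);\Q)\neq 0$ for $k\geq 2m(\graf)$, hence $\cat(B_k(\graf))\geq m(\graf)$, and Theorem \ref{thm:upper bound} supplies the matching upper bound. The ordered case is identical (or can be deduced from the finite cover $F_k(\graf)\to B_k(\graf)$, as you note). This is the paper's argument: once asphericity is in play, mere nonvanishing of rational homology in degree $m(\graf)$ suffices, and no injectivity on $H_1$ is needed at all.
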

\begin{proof}
By the Eilenberg--Ganea theorem \cite{EilenbergGanea:LSCAG} and Theorem \ref{thm:asphericity}, the quantity $\cat(B_k(\graf))$ coincides with its cohomological dimension. In the range $k\geq 2m(\graf)$, Proposition \ref{prop:torus nonzero} implies that $H_{m(\graf)}(B_k(\graf);\Q)\neq 0$, so $m(\graf)$ is a lower and, by Theorem \ref{thm:upper bound}, also an upper bound. The argument in the ordered case is identical.
\end{proof}

\begin{corollary}\label{cor:upper bound}
For any connected graph $\graf$, $k\geq0$, and $r>0$, the quantities $\tc_r(F_k(\graf))$ and $\tc_r(B_k(\graf))$ are bounded above by $r m(\graf)$.
\end{corollary}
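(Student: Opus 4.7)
The plan is to derive the corollary directly from Theorem \ref{thm:upper bound} using two standard general inputs, after which the bound falls out by a one-line calculation.

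The first input is the classical inequality $\tc_r(X) \leq r\cdot \cat(X)$, valid for any path-connected space $X$. I would obtain it as the composition of $\tc_r(X) \leq \cat(X^r)$---an instance of Schwarz's bound of sectional category by the Lusternik--Schnirelmann category of the base, applied to the $r$-fold evaluation fibration $X^I \to X^r$---with the subadditivity $\cat(X^r) \leq r\cdot \cat(X)$ under Cartesian products. Both of these facts are standard parts of the folklore of the subject and respect the reduced convention in force here.

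The second input is the monotonicity $\cat(F_k(\graf)) \leq \cat(B_k(\graf))$ under the finite covering $F_k(\graf)\to B_k(\graf)$. This rests on the general fact that, for any covering $p\colon \widetilde X\to X$ between path-connected spaces and any categorical open $U\subseteq X$, the homotopy lifting property transports a null-homotopy of $U\hookrightarrow X$ to a null-homotopy of each component of $p^{-1}(U)$ in $\widetilde X$; the several endpoints of these null-homotopies may then be joined by paths to realize $p^{-1}(U)$ as a single categorical open of $\widetilde X$, so that a categorical cover of $X$ of cardinality $n$ pulls back to one of $\widetilde X$ of cardinality $n$.

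Combining these inputs with Theorem \ref{thm:upper bound} yields
\[
\tc_r(B_k(\graf)) \leq r\cdot \cat(B_k(\graf)) \leq r\cdot m(\graf)
\]
and
\[
\tc_r(F_k(\graf)) \leq r\cdot \cat(F_k(\graf)) \leq r\cdot \cat(B_k(\graf)) \leq r\cdot m(\graf),
\]
as required. There is no substantive obstacle here: the argument simply packages two standard inequalities from Lusternik--Schnirelmann theory with the nontrivial content of Theorem \ref{thm:upper bound}.
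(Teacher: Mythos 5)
Your proof is correct, and it reaches the bound by a genuinely different (and more elementary) route than the paper. The paper routes everything through asphericity: invoking Theorem \ref{thm:asphericity} and the Eilenberg--Ganea theorem as in the proof of Corollary \ref{cor:cat}, it bounds $\tc_r(F_k(\graf))$ and $\tc_r(B_k(\graf))$ by the cohomological dimension of the Cartesian power $B_k(\graf)^r$, which is then at most $r\,m(\graf)$ by Theorem \ref{thm:upper bound} together with K\"unneth-type bookkeeping for products of groups; the ordered case rides along because cohomological dimension is monotone under passing to subgroups (the pure braid group sits inside the full one). You instead stay entirely inside Lusternik--Schnirelmann theory: $\tc_r(X)\le\cat(X^r)\le r\,\cat(X)$ via Schwarz's inequality and product subadditivity, plus the covering-space monotonicity $\cat(F_k(\graf))\le\cat(B_k(\graf))$, which is precisely the topological shadow of the paper's subgroup monotonicity of cohomological dimension. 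What your route buys is that the corollary becomes a purely formal consequence of Theorem \ref{thm:upper bound}, with no appeal to Theorem \ref{thm:asphericity} or group cohomology at all; what the paper's route buys is uniformity with the surrounding material, since the same asphericity/cohomological-dimension framing is what later interfaces with Theorem \ref{thm:FO} for the lower bounds, and its K\"unneth remark simultaneously records that the bound is sharp in the stable range. One caveat, shared with the paper's own terse argument and glossed over in the statement (which allows $m(\graf)=0$ and arbitrary $k$): both $\tc_r(X)\le\cat(X^r)$ and your path-joining step in the covering argument require the relevant configuration spaces to be path-connected, which holds as soon as $\graf$ has an essential vertex, and also needs the mild point-set hypotheses (local path-connectedness, normality) that these spaces satisfy.
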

\begin{proof}
By the same asphericity considerations as in the previous argument, the quantity $\tc_r(F_k(\graf))$ is bounded above by the cohomological dimension of the Cartesian power $B_k(\graf)^r$. In light of the rational K\"{u}nneth isomorphism, Proposition \ref{prop:torus nonzero} yields the claim as before.
\end{proof}

\section{Detection}\label{section:detection}

In light of the strategies for bounding topological complexity discussed in Section \ref{section:tori}, there are two obvious questions. Are the maximal tori constructed in Section \ref{section:configuration spaces} decomposable? And, if so, can we find a disjoint pair of such? As we will see, the answer to the first question is ``{sometimes}'' homologically and ``{always}'' homotopically---it is precisely the equivocal nature of this answer that blocked progress on Farber's conjecture for so long. The answer to the second question is ``yes'' in the ordered setting; in fact, due to the action of the symmetric group, finding two disjoint decomposable tori is no harder than finding the first. In the unordered setting, the best known answer is ``with assumptions on the graph, eventually.'' Completing this answer is probably the most interesting open problem in the area---see Section \ref{section:problems}.

In every case, the approach to establishing decomposability is to \emph{detect} the image (in $H_1$ or in $\pi_1$) of the torus, a map \emph{into} the configuration space, with an appropriate map \emph{out}. To establish disjointness, we need only find a detector for the first map that fails to detect the second.

\subsection{Homological strategies}\label{section:homological strategies} In the ordered case, the crux of the matter of detection is simple. Consider the commutative diagram
\[\xymatrix{
(S^1)^W\ar[d]\ar[rr]&&F_k(\graf)\ar[d]\\
S^1\ar[r]^-\simeq&F_2(\stargraph{3})\ar[r]^-{F_2(\varphi_v)}&F_2(\graf),
}\] where the top arrow is a maximal torus, the lefthand map is projection onto the factor indexed by the essential vertex $v$, the righthand map is the projection onto the coordinates indexing the two particles located near $v$, and $\varphi_v$ is a topological embedding sending $v_0\in \stargraph{3}$ to $v$. If it is possible to choose the embeddings $\varphi_v$ in such a way that the bottom composite induces an injection on $H_1$ for every $v\in W$, then our maximal torus is detectable via the map $F_k(\graf)\to \bigsqcap_{v\in W} F_2(\graf)$ given by the appropriate coordinate projections. 

With this detection strategy, disjointness is automatic. Restricting (without loss of generality) to the case $k=2m(\graf)$, and numbering the essential vertices of $\graf$, consider the $W$-torus in which particles $1$ and $2$ orbit vertex $1$, particles $3$ and $4$ orbit vertex $2$, and so on. This torus is disjoint from the one obtained by applying a cyclic permutation to the labels of the particles; indeed, the homomorphism induced by the detection map for the first vanishes on the image (in $H_1$ or in $\pi_1$) of the second.

\begin{remark}
Although we will have no cause to use it directly, no discussion of the first homology of configuration spaces of graphs would be complete without reference to Ko--Park's pioneering work \cite{KoPark:CGBG}. The author finds it likely that every fact mentioned below concerning these groups is implicit---though likely not obvious---in that work.
\end{remark}

\subsubsection{Trees} The detection strategy described above was first implemented by Farber in his proof of the $r=2$ case of Theorem \ref{thm:ordered} for trees \cite[Thm. 10]{Farber:CFMPG}. In this case, one has access to a deformation more radical than those discussed thus far \cite[Thm. 12]{Farber:CFMPG}.

\begin{theorem}\label{thm:tree}
For a tree $\graphfont{T}$, the space $F_2(\graphfont{T})$ has the homotopy type of a graph.
\end{theorem}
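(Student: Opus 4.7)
The plan is to exhibit $F_2(\graphfont{T})$ as a deformation retract of a $1$-dimensional CW complex, with the absence of cycles in $\graphfont{T}$ as the essential ingredient.

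First, I would apply Proposition \ref{prop:deformation retract} with $W$ the set of essential vertices of a sufficient subdivision $\graphfont{T}'$ of $\graphfont{T}$, reducing the problem to showing that $F_2(\graphfont{T}')_W$ has the homotopy type of a graph. This space carries a natural CW structure whose $2$-cells are of two kinds: open squares $\mathrm{int}(e_1)\times\mathrm{int}(e_2)$ for ordered pairs of distinct edges $e_1\neq e_2$, and two triangles per edge $e$, corresponding to the two orderings of a pair of particles on $e$.

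The heart of the argument is a collapse of these $2$-cells onto the $1$-skeleton via a discrete Morse matching. I would root $\graphfont{T}'$ at a leaf $r$, inducing a well-defined ``leafward'' direction on $\graphfont{T}'\setminus\{r\}$, and in particular a leafward endpoint $\ell(e)$ of every edge. The assignment $e\mapsto \ell(e)$ is a bijection between edges of $\graphfont{T}'$ and non-root vertices. Using this bijection, I would pair each square $2$-cell $(e_1,e_2)$ with the boundary $1$-cell $(\ell(e_1),e_2)$; a parallel rule handles the diagonal triangles. Standard discrete Morse theory then produces a deformation retract of $F_2(\graphfont{T}')_W$ onto the unmatched subcomplex, which has dimension at most one.

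The main obstacle is verifying acyclicity of the matching. The key point is that any $V$-path through the matched pairs corresponds to a walk on the rooted tree that moves strictly toward the root at each step: the $1$-cell $(\ell(e_1),e_2)$ admits only one other continuation, via the rootward endpoint of $e_1$ (the further boundary cells of the form $(e_1,v)$ are unmatched), and that endpoint is the leafward endpoint of exactly one new edge---its parent edge. Since there is no closed ascending walk in a rooted tree, no $V$-cycle exists. This strict descent toward the root is precisely what fails on a graph with a cycle, and is the point at which the tree hypothesis is indispensable.
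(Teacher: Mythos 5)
There is a genuine gap at the foundation of your argument: the space $F_2(\graphfont{T}')_W$, with $W$ the set of \emph{essential} vertices, does not carry the CW structure you describe. Proposition \ref{prop:deformation retract} only constrains the open stars of the vertices in $W$; it does not discretize positions along edges. Two particles may therefore still lie arbitrarily close together in the interior of a single edge, or on two edges sharing a vertex outside $W$ (e.g.\ a subdivision vertex), so the closures of your proposed $2$-cells---both the two ``triangles'' inside $\mathrm{int}(e)\times\mathrm{int}(e)$ and the squares $\mathrm{int}(e_1)\times\mathrm{int}(e_2)$ with $\overline{e_1}\cap\overline{e_2}\neq\varnothing$---meet the diagonal, which is excluded from $F_2$. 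These are not closed cells of the space (which is not even compact along those directions), so there is no regular finite complex on which to run discrete Morse theory, and the ``parallel rule'' for the triangles is left unspecified at exactly the problematic cells.

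The repair is to found the argument on an honest combinatorial model: Abrams' discretized configuration space of a sufficiently subdivided tree, whose cells are the products $\sigma_1\times\sigma_2$ with $\overline{\sigma_1}\cap\overline{\sigma_2}=\varnothing$ \cite{Abrams:CSBGG} (equivalently, impose the star condition at \emph{all} vertices of the subdivision, or use \'{S}wi\k{a}tkowski's cube complex \cite{Swiatkowski:EHDCSG}). In that model the same-edge cells simply do not occur, so the triangles vanish; the only $2$-cells are squares $e_1\times e_2$ with disjoint closures, your matching $(e_1,e_2)\leftrightarrow(\ell(e_1),e_2)$ is well defined because $\ell(e_1)\in\overline{e_1}$ forces $\ell(e_1)\notin\overline{e_2}$, every $2$-cell is matched to a distinct $1$-cell, and your acyclicity argument (strict rootward descent in the first coordinate with the second coordinate frozen) is correct; one should also phrase the conclusion via Forman's theorem---homotopy equivalence to a complex with one cell per critical cell---rather than a retraction onto an ``unmatched subcomplex,'' since the critical cells need not form a subcomplex. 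With these substitutions your proof goes through, and it is essentially the discrete Morse route of \cite{FarleySabalka:DMTGBG} premised on \cite{Abrams:CSBGG}, which the paper mentions only in a remark: the paper itself gives no proof of Theorem \ref{thm:tree}, citing instead Farber's explicit deformation retraction \cite{Farber:CFMPG}. So your strategy is a legitimate alternative, but as written the complex your Morse matching is supposed to run on does not exist.
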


A cycle in this graph corresponds to one of the star cycles described in Section \ref{section:stars}; in other words, in this case, the bottom map in the above commutative diagram is, up to homotopy, the inclusion of a cycle in a graph, hence certainly injective on homology. 

\subsubsection{Articulations}\label{subsection:articulation} The success of our detection strategy for trees prompts an obvious general question: when is the homomorphism $H_1(F_2(\stargraph{3}))\to H_1(F_2(\graf))$ induced by a topological embedding injective? A partial answer to this question was given in \cite{LuetgehetmannRecio-Mitter:TCCSFAGBG}.

\begin{proposition}\label{prop:injective articulation}
Let $\varphi:\stargraph{3}\to \graf$ be a topological embedding with $\varphi(v_0)=v$. If $\mathrm{im}(\varphi)$ intersects more than one component of $\graf\setminus \{v\}$, then $F_2(\varphi)_*$ is injective.
\end{proposition}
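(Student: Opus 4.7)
The plan is to specialize the argument in the proof of Proposition \ref{prop:torus nonzero} to the single-vertex, ordered setting. Since $F_2(\stargraph{3}) \simeq S^1$, the group $H_1(F_2(\stargraph{3})) \cong \Z$ is generated by the star cycle, so injectivity of $F_2(\varphi)_*$ on $H_1$ is equivalent to the image of the star cycle having infinite order in $H_1(F_2(\graf))$.

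After subdividing to invoke Proposition \ref{prop:deformation retract}, I would apply the ordered analog of the cofiber sequence used in the proof of Proposition \ref{prop:torus nonzero}: collapsing configurations with neither particle in the open star of $v$ exhibits $F_2(\graf)_v / F_2(\graf_v)$ as a disjoint union of two copies of $F_1(\graf_v)_+ \wedge \stargraph{d(v)}/\partial\stargraph{d(v)}$, one for each choice of which of the two ordered particles sits in the open star. Passing to $H_1$ and applying the K\"{u}nneth isomorphism yields a detection homomorphism
\[
H_1(F_2(\graf)) \longrightarrow \Z[\pi_0(\graf\setminus\{v\})]^{\oplus 2} \otimes H_1(\stargraph{d(v)}/\partial\stargraph{d(v)}).
\]

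Next, I would compute the image of the star cycle. Writing $e_i \in \pi_0(\graf\setminus\{v\})$ for the component containing the image of the $i$th edge of $\stargraph{3}$ under $\varphi$, and using the basis $h_2^v, \ldots, h_{d(v)}^v$ from the proof of Proposition \ref{prop:torus nonzero}, I would sum the contributions of the six arcs of the hexagonal star cycle. Each arc has one stationary particle lying in some component $e_i$ while the other crosses the star between two leaves, producing a rank-one contribution. The total, in each of the two summands, takes the form $(e_3 - e_1)h_2^v + (e_2 - e_1)h_3^v$. The hypothesis is precisely that the three $e_i$ are not all equal, and a case-by-case check over the three possible pairwise coincidences shows that this expression is always nonzero in the free abelian group $\Z[\pi_0(\graf\setminus\{v\})] \otimes \Z\langle h_2^v, h_3^v\rangle$, completing the argument.

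The main obstacle is the bookkeeping of step three, in which one must track the orientations of the six arcs and their images in the chosen basis for $H_1(\stargraph{d(v)}/\partial\stargraph{d(v)})$. However, this is a direct specialization of a computation already carried out in the proof of Proposition \ref{prop:torus nonzero}, so only care, not ingenuity, is required.
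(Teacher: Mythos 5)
Your proposal is correct and is precisely the adaptation of the argument of Proposition \ref{prop:torus nonzero} that the paper has in mind for this statement (the published proof relies on graphs with sinks instead, but the survey explicitly recommends the route you took). One small correction: the quotient $F_2(\graf)_v/F_2(\graf_v)$ is a wedge, not a disjoint union, of the two copies of $F_1(\graf_v)_+\wedge\stargraph{d(v)}/\partial\stargraph{d(v)}$, since the collapsed subspace provides a common basepoint; this does not affect the identification of $H_1$ of the quotient or the rest of your computation.
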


In order to apply this proposition, it must be the case that removing the vertex in question causes the graph to become disconnected. Such a vertex is sometimes called an \emph{articulation}; thus, calling graphs in which every essential vertex is an articulation ``fully articulated,'' our detection strategy immediately implies that a fully articulated graph admits a pair of disjoint decomposable maximal tori, establishing Theorem \ref{thm:ordered} for such graphs \cite[Thm. A]{LuetgehetmannRecio-Mitter:TCCSFAGBG}

We encourage the reader to prove Proposition \ref{prop:injective articulation} herself by adapting the argument of Proposition \ref{prop:torus nonzero}; in fact, the argument in this case is much simpler. The proof given in \emph{loc. cit.} is different, relying instead on the notion of a graph with \emph{sinks}, which are special vertices at which particles are permitted to collide. We will return to these ideas below in Section \ref{section:homotopical strategies}, where they will be a key component in the proofs of the main results.

\begin{remark}
The sufficient condition of Proposition \ref{prop:injective articulation} is far from necessary---see Section \ref{section:planar} below, for example.
\end{remark}

\subsubsection{Discrete Morse methods} The problem of homological detection can equivalently be viewed as a problem of producing cohomology classes with certain desirable properties. Surely, then, one should be able to approach the problem by studying the cohomology ring of the configuration spaces directly. Unfortunately, these rings are very complicated and almost completely unknown in general, the notable exception being the case of a tree. Here, the differential in Farley--Sabalka's discrete Morse complex vanishes \cite{FarleySabalka:OCRTBG}, reducing the problem to combinatorics (albeit combinatorics of substantial complexity unbounded in $k$). 

This simplification has been successfully exploited in studying the detection problem and deducing results of various flavors for (sequential) topological complexity, in particular a proof of Theorem \ref{thm:ordered} for trees \cite{Aguilar-GuzmanGonzalezHoekstra-Mendoza:FSMTMHTCOCST}. Notably, these techniques sometimes apply outside of the constraints of Farber's original conjecture, namely to unordered configurations \cite{Scheirer:TCUCSCG,HoekstraMendoza:BHTCCST} and small numbers of particles.

\subsubsection{Planar graphs}\label{section:planar} In view of its complexity and mystery, one wants to deal directly with the cohomology of $F_k(\graf)$ as little as possible. The main insight of \cite{Knudsen:FCPG} is that, for planar graphs, one can circumvent it entirely, working instead with the cohomology of the configuration spaces of the plane, which are exceedingly well studied \cite{Arnold:CRCBG}. In our current setup, all that is required is the following easy observation, which the reader is invited to prove.

\begin{proposition}\label{prop:planar star}
Any piecewise linear embedding $\stargraph{3}\to \R^2$ induces a homotopy equivalence $F_2(\stargraph{3})\simeq F_2(\R^2)$.
\end{proposition}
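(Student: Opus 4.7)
The plan is to observe that both sides are of the homotopy type of $S^1$, reducing the proposition to a degree computation that can be carried out explicitly on a single reference embedding. Indeed, the example recalled at the beginning of Section~\ref{section:stars} gives an explicit homotopy equivalence $S^1 \xrightarrow{\simeq} F_2(\stargraph{3})$ via the star cycle, call it $\sigma$, while classically the Gauss map $\gamma\colon F_2(\R^2) \to S^1$, $(x,y) \mapsto (x-y)/|x-y|$, is a deformation retraction. Since both $\pi_1$'s are $\Z$, the proposition is equivalent to the statement that the composite
\[
S^1 \xrightarrow{\sigma} F_2(\stargraph{3}) \xrightarrow{F_2(\varphi)} F_2(\R^2) \xrightarrow{\gamma} S^1
\]
has degree $\pm 1$.

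To carry out the degree computation, I would parametrize $\sigma$ through its six rest configurations $(t_i, t_j)$ with $i \neq j$, in each of which one particle occupies the tip of one edge of $\varphi(\stargraph{3})$ and the other particle occupies the tip of a different edge; between consecutive rest configurations, exactly one particle slides monotonically across the central vertex $\varphi(v_0)$ from one edge onto another. Under $\gamma$, each rest configuration places the displacement vector in one of three open sectors of $S^1$, determined by the cyclic order of the three edges at $\varphi(v_0)$; during each of the six phases, the moving particle carries the displacement monotonically from its starting sector into the adjacent sector in a fixed rotational sense (determined by the cyclic order). Summing over six phases yields a total winding of $\pm 2\pi$, i.e., degree $\pm 1$.

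The main obstacle is verifying the monotonicity claim for each phase without being swamped by casework. To handle this, I would first reduce to a symmetric reference embedding: any PL embedding of $\stargraph{3}$ in $\R^2$ is determined up to ambient isotopy (and possibly reflection) by the cyclic order of its edges at the image of $v_0$, so it suffices to verify the degree computation for the ``standard'' embedding whose edges are the three straight segments from the origin at angles $0,\ 2\pi/3,\ 4\pi/3$. In this reference case, the six rest displacements are directly computable—their arguments are $-\pi/6,\ -\pi/2,\ -5\pi/6,\ -7\pi/6,\ -3\pi/2,\ -11\pi/6$ under continuous lift—and the intermediate paths (in which one endpoint passes through the origin) cross no additional sectors, since the stationary particle is pinned to a tip far from the transition. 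Isotopy and reflection change the induced map by, at worst, a sign of the degree, preserving the conclusion.
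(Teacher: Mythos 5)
The paper offers no proof of this proposition to compare against---it is explicitly left as an exercise (``which the reader is invited to prove'')---so your argument stands or falls on its own, and it stands. The reduction is sound: both spaces are aspherical homotopy circles (the star cycle $\sigma$ and the Gauss map $\gamma$ being the standard equivalences), so $F_2(\varphi)$ is an equivalence precisely when the composite $\gamma\circ F_2(\varphi)\circ\sigma$ has degree $\pm1$, and your computation for the symmetric embedding gives exactly the six rest directions spaced by $\pi/3$ with each of the six phases contributing one further $\pi/3$ of winding. Two small points deserve tightening. First, your stated reason that the intermediate paths ``cross no additional sectors, since the stationary particle is pinned to a tip far from the transition'' is not quite the right justification; the clean reason is that during each phase the displacement vector $x-y$ traverses two straight segments not passing through the origin, and along a segment $t\mapsto a+tb$ the argument is monotone because its derivative $(a\times b)/|a+tb|^2$ has constant sign---so each phase changes the lifted argument by exactly $\pm\pi/3$ and no extra winding can occur. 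Second, the reduction to the symmetric embedding rests on the assertion that a PL embedded tripod in $\R^2$ is standard up to ambient homeomorphism (equivalently, up to ambient isotopy and a reflection); this is true and classical (uniqueness of planar embeddings of trees, or two applications of PL unknottedness of arcs in the plane), but it is the one external input you invoke without argument, and since $F_2(h)$ is a homeomorphism for any ambient homeomorphism $h$ you do not even need to track the sign of the degree. If you prefer to avoid the classification altogether, note that the same segment-by-segment monotonicity argument applies to an arbitrary PL embedding once one observes that at each rest configuration the displacement direction is determined by the two occupied tips, and the total winding can be identified with the cyclic order of the three edge-germs at $\varphi(v_0)$; but as written your route is correct.
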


It follows that \emph{any} embedding of $\stargraph{3}$ into a planar graph $\graf$ induces an injection on $H_1(F_2(-))$. Indeed, choosing a piecewise linear embedding of $\graf$ into $\R^2$ and assuming without loss of generality that the embedding of $\stargraph{3}$ is also piecewise linear, the composite map \[F_2(\stargraph{3})\to F_2(\graf)\to F_2(\R^2)\] is a homotopy equivalence, so the first map is injective on homology. This reasoning establishes Theorem \ref{thm:ordered} for all planar graphs \cite[Thm. 1.1]{Knudsen:FCPG}.

\subsection{Obstacles}\label{section:obstacles} Thus far, we have said nothing about the unordered case. The outlook in this setting is rather grim, as our strategy for establishing disjointness relied crucially on the action of the symmetric group. The situation is even worse than this, however; in fact, we will be hard pressed even to establish decomposability! As it turns out, the obstacle here and the obstacle to extending further in the ordered setting arise from a common root.

We write $\thetagraph{n}$ for the suspension of the set $\{1,\ldots, n\}$ with its minimal cell structure---see Figure \ref{figure:theta}.

\begin{proposition}\label{prop:theta}
In $H_1(B_2(\thetagraph{3}))$, the star classes at the bottom and top vertices, both oriented counterclockwise, coincide.
\end{proposition}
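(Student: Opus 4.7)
My plan is to combine the vertex explosion long exact sequence introduced in the proof of Proposition~\ref{prop:torus nonzero} with the planar detection strategy of Section~\ref{section:planar}. Writing $v_\bot$ and $v_\top$ for the two essential vertices of $\thetagraph{3}$, the idea is first to prove that $\beta$ and $\tau$ are proportional in $H_1(B_2(\thetagraph{3}))$, and then to pin down the proportionality factor via a planar embedding.

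Applying the vertex explosion at $v_\bot$ yields the long exact sequence
\[
H_1(B_2((\thetagraph{3})_{v_\bot})) \xrightarrow{i_*} H_1(B_2(\thetagraph{3})) \xrightarrow{q_*} H_1\bigl(B_1((\thetagraph{3})_{v_\bot})_+\wedge\stargraph{3}/\partial\stargraph{3}\bigr).
\]
Since $(\thetagraph{3})_{v_\bot}$ is homeomorphic to the star $\stargraph{3}$ centered at $v_\top$, the left-hand term is $H_1(B_2(\stargraph{3})) \cong \Z$, whose generator maps under $i_*$ to $\tau$, while the right-hand term reduces by K\"unneth to $H_1(\stargraph{3}/\partial\stargraph{3}) \cong \Z^2$. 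Next, I adapt the computation in the proof of Proposition~\ref{prop:torus nonzero} to the submaximal single-vertex torus $\beta$: with the product over $W$ reduced to a single factor at $v_\bot$, the image $q_*(\beta)$ appears as a linear combination of $h_2^{v_\bot}$ and $h_3^{v_\bot}$ with coefficients of the form $e_i^{v_\bot}-e_j^{v_\bot}\in\Z[\pi_0((\thetagraph{3})_{v_\bot})]$. Since $(\thetagraph{3})_{v_\bot}$ is connected, all such coefficients vanish, so $q_*(\beta)=0$. By exactness, $\beta = n\tau$ for some integer $n$.

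To determine $n$, I fix a piecewise linear planar embedding $\thetagraph{3}\hookrightarrow\R^2$ and consider the induced map $\iota:B_2(\thetagraph{3})\to B_2(\R^2)$. By the unordered analog of Proposition~\ref{prop:planar star}, obtained by passing to $\Sigma_2$ quotients, the composites $B_2(\stargraph{3})\to B_2(\thetagraph{3})\to B_2(\R^2)$ at both vertices are homotopy equivalences, sending the respective star class to the generator of $H_1(B_2(\R^2)) \cong \Z$. Since $\beta$ and $\tau$ are both oriented counterclockwise with respect to the common ambient planar orientation, their images under $\iota_*$ coincide and are a generator. Hence $n\iota_*(\tau)=\iota_*(\tau)$ forces $n = 1$, giving $\beta = \tau$.

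The main obstacle will be transferring the computation in the proof of Proposition~\ref{prop:torus nonzero} from the setting of maximal tori to the submaximal single-vertex torus $\beta$. Although the argument should go through essentially verbatim, with the product indexed over the single vertex $v_\bot$ rather than over $W$, some care is needed to identify the correct basis and to verify that $q_*$ really sends $\beta$ to the stated expression. A more concrete alternative would be to exhibit an explicit cellular $2$-chain in a cubical model of $B_2(\thetagraph{3})$ whose boundary is $\beta-\tau$, but this involves substantially more bookkeeping.
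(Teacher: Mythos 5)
Your argument is correct, and its skeleton---vertex explosion at one of the two vertices followed by planar detection to pin down the proportionality constant---is the same as the paper's. The genuine difference lies in how you show that the star class at the exploded vertex dies in the third term of the exact sequence. The paper argues by symmetry: both star classes are invariant under the $C_3$-action permuting the edges of $\thetagraph{3}$, while $H_1(\stargraph{3}/\partial\stargraph{3})$ is the reduced regular representation of $C_3$ and hence has no nonzero fixed vectors, so $q_*(\beta)=0$ with no chain-level computation at all. You instead specialize the explicit formula from the proof of Proposition~\ref{prop:torus nonzero} to a single vertex and two particles, where the coefficients $e_i^{v_\bot}-e_j^{v_\bot}$ vanish because $\thetagraph{3}\setminus\{v_\bot\}$ is connected; the verification you flag is exactly the local computation the paper itself invokes by ``reducing to the case $\graf=\stargraph{3}$,'' so it is not an obstacle. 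What your route buys is generality: it shows that the star class at \emph{any} non-separating trivalent vertex of \emph{any} graph maps to zero under the local explosion---precisely the non-rigidity phenomenon noted in the remark following Proposition~\ref{prop:torus nonzero}---whereas the $C_3$ argument is special to the symmetric theta graph but avoids redoing the computation. The second halves of the two proofs agree in substance: the paper evaluates a generator of $H^1(F_2(\R^2))$ on both classes, while you pass to the unordered analogue of Proposition~\ref{prop:planar star} and compare images in $H_1(B_2(\R^2))$; both are the same planar detection, and your orientation bookkeeping correctly forces $n=1$.
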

\begin{proof}
Notice first that both classes are invariant under the action of the cyclic group $C_3$ on the edges of $\thetagraph{3}$. On the other hand, choosing one of the two vertices and subdividing appropriately, we have the $C_3$-equivariant exact sequence
\[
\xymatrix{
H_1(B_2(\stargraph{3}))\ar[r]& H_1(B_2(\thetagraph{3}))\ar[r]&H_1(\stargraph{3}/\partial\stargraph{3})
}\] as in the proof of Proposition \ref{prop:torus nonzero}, and the rightmost group in this sequence is isomorphic as a $C_3$-module to the reduced regular representation, which has no fixed points. Since the lefthand map is the inclusion of the star class at the other vertex, it follows that the two classes are linearly dependent. Embedding $\thetagraph{3}$ in the plane, it is easy to see that a generator of $H^1(F_2(\R^2))$ takes the same value on both classes, implying the claim.
\end{proof}

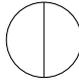
\begin{figure}[ht]
\begin{tikzpicture}
%\fill[black] (0,-.5) circle (2.5pt);
%\draw(0,0) circle (.5cm);
%\draw(0,-1.2) node[below]{$\cyclegraph{1}$};
%\begin{scope}[xshift=2cm]
%\fill[black] (0,-.5) circle (2.5pt);
%\fill[black] (0,.5) circle (2.5pt);
%\draw(0,0) circle (.5cm);
%\draw(0,-1.2) node[below]{$\cyclegraph{2}$};
%\end{scope}
%\begin{scope}[xshift=4cm]
%\fill[black] (0,-.5) circle (2.5pt);
%\fill[black] (0,-1) circle (2.5pt);
%\draw(0,0) circle (.5cm);
%\draw(0,-1) -- (0,-.5);
%\draw(0,-1.2) node[below]{$\lollipopgraph{}$};
%\end{scope}
%\begin{scope}[xshift=6cm]
%\fill[black] (0,-.5) circle (2.5pt);
%\fill[black] (0,.5) circle (2.5pt);
%\fill[black] (0,-1) circle (2.5pt);
%\draw(0,0) circle (.5cm);
%\draw(0,-1) -- (0,-.5);
%\draw(0,-1.2) node[below]{$\lollipopgraph{2}$};
%\end{scope}
\begin{scope}
%\fill[black] (0,-.5) circle (2pt);
%\fill[black] (0,.5) circle (2pt);
\draw(0,0) circle (.5cm);
\draw(0,.5) -- (0,-.5);
%\draw(0,-1.2) node[below]{$\thetagraph{3}$};
\end{scope}
%\begin{scope}[xshift=10cm]
%\fill[black] (0,-.5) circle (2.5pt);
%\fill[black] (0,.5) circle (2.5pt);
%\draw(0,0) circle (.5cm);
%\draw(0,0) ellipse (.175cm and .5cm);
%\draw(0,-1.2) node[below]{$\thetagraph{4}$};
%\end{scope}

\end{tikzpicture}
\caption{The theta graph $\thetagraph{3}$.}\label{figure:theta}
\end{figure}

This ``$\theta$-relation'' implies that, even if each individual circle factor in a torus is detectable in homology, the various images could overlap, destroying decomposability. We immediately conclude, for example, that $\thetagraph{3}$ admits no indecomposable unordered maximal torus, destroying any hope of adapting our strategy to the unordered setting without drastic modification.

Unfortunately, the same relation (or rather its double cover) holds in the ordered setting, where it destroys any hope of adapting our strategy beyond the limits of articulations and planarity detailed above. 

\begin{proposition}\label{prop:planarity}
The graph $\graf$ is non-planar if and only if some ordered maximal torus in $\graf$ is indecomposable.
\end{proposition}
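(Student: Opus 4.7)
The plan is to prove the two implications separately, with the forward direction following from the planar detection strategy of Section~\ref{section:planar} and the reverse one requiring a Kuratowski-type reduction.

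For the forward direction (planar implies every ordered maximal torus is decomposable), fix a PL embedding $\iota\colon\graf\hookrightarrow\R^2$ and consider any ordered maximal $W$-torus arising from an embedding $\varphi\colon\stargraph{3}^{\sqcup W}\sqcup\stargraph{0}^{\sqcup k-2|W|}\to\graf$ with pair $(i_v,j_v)$ at each $v\in W$. Compose with $F_k(\graf)\to F_k(\R^2)$; on the $v$-th circle only the particles $i_v,j_v$ move, within a neighborhood of $v$ disjoint from the other particles, and Proposition~\ref{prop:planar star} identifies this restriction with $\pm\ell_{i_vj_v}\in H_1(F_2(\R^2))$. Thus the composite $(S^1)^W\to F_k(\R^2)$ sends $e_v\mapsto\pm\ell_{i_vj_v}$ in $H_1(F_k(\R^2))=\bigoplus_{a<b}\Z\cdot\ell_{ab}$, with the remaining linking numbers vanishing. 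Since an ordered maximal torus uses $2|W|$ distinct particle labels, the classes $\ell_{i_vj_v}$ are distinct basis elements, and the composite — hence the torus — is injective on $H_1$.

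For the converse, I would invoke Kuratowski's theorem to obtain a topological subdivision of $K\in\{K_5,K_{3,3}\}$ inside $\graf$, and reduce to finding an indecomposable ordered maximal torus on $K$ itself. The reduction is by naturality: a relation $\sum_{v\in W_K}c_v\alpha_v=0$ in $H_1(F_{2m(K)}(K))$ among the star classes lifts, via the natural composite $F_{2m(K)}(K)\hookrightarrow F_{2m(K)}(\graf)\hookrightarrow F_k(\graf)$ (adding stationary particles in the second map), to the analogous relation in $H_1(F_k(\graf))$; after suitable further subdivision (Proposition~\ref{prop:deformation retract}) to make room, this $W_K$-torus augments to a maximal $W_\graf$-torus in $\graf$ by appending star cycles at the remaining essential vertices, and the relation persists. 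To exhibit the dependence in $K$, I would compute $H_1$ of the relevant configuration space via {\'{S}}wi\k{a}tkowski's cubical model, or equivalently via Ko--Park's presentation of graph braid groups, and locate the relation among the five (resp. six) star classes at the branch vertices.

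The main obstacle is producing this explicit dependence. Planarity is detected homologically by the existence of the map to $F_k(\R^2)$ and the distinctness of the resulting linking classes; non-planarity deprives us of this detector but does not, of itself, force a relation. One must therefore work directly in the cellular model and exploit the non-planarity of $K_5$ or $K_{3,3}$ in an essential way, exhibiting a $2$-chain whose boundary is the desired combination of star cycles. I would expect such a relation to reflect a global \emph{winding-number coherence} that any planar embedding automatically supplies but that $K_5$ and $K_{3,3}$ cannot support consistently.
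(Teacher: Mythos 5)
Your forward direction (planar $\Rightarrow$ every ordered maximal torus is decomposable) is correct and is essentially the paper's argument from Section \ref{section:planar}, phrased with the total map to $F_k(\R^2)$ and linking classes instead of coordinate projections to $F_2(\R^2)$; the point that each particle's individual trajectory stays in the contractible image of the star, so only the linking number $\ell_{i_vj_v}$ survives, makes this fine.

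The converse, however, has a genuine gap, and you have identified it yourself: your reduction via Kuratowski and naturality is sound, but everything hinges on actually exhibiting a nontrivial $\Z$-linear relation among the star classes in $F_{2m(K)}(\completegraph{3,3})$ or $F_{2m(K)}(\completegraph{5})$, and your proposal only gestures at a computation in the cubical (or Ko--Park) model and a hoped-for ``winding-number coherence'' obstruction. As you rightly worry, non-planarity by itself does not force such a relation, so without producing it the proof is incomplete. The missing idea in the paper is entirely local and needs no global computation: it is the $\theta$-relation of Proposition \ref{prop:theta} (or rather its ordered double cover). Any two degree-three vertices of $\completegraph{3,3}$ on the same side are joined by three internally disjoint paths, giving a topological embedding of $\thetagraph{3}$; Proposition \ref{prop:theta} then identifies the counterclockwise star class at one vertex with the clockwise star class at the other. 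Running through the three pairs of left-hand vertices returns the original star class with reversed orientation, so $\alpha=-\alpha$, i.e.\ $2\alpha=0$; a similar argument works in $\completegraph{5}$. Hence the star class dies rationally, the corresponding circle factor of any maximal torus through that vertex is killed on $H_1\otimes\Q$, and the torus is homologically indecomposable---which is exactly the relation your reduction needs, and which then pushes forward to $\graf$ by functoriality as you describe.
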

\begin{proof}
Consider the complete bipartite graph $\completegraph{3,3}$ as depicted in Figure \ref{figure:nonplanar}. This graph receives a piecewise linear embedding from $\thetagraph{3}$ taking the top vertex to the upper left vertex and the bottom to the middle left. Applying the $\theta$-relation, it follows that the counterclockwise oriented star class at upper left is equal to the clockwise oriented star class at middle left. Repeating this argument with the other pairs of lefthand vertices, we conclude that the star class at upper left is equal to its own additive inverse. By symmetry, \emph{any} star class in $\completegraph{3,3}$ has order at most $2$, and the same holds for the complete graph $\completegraph{5}$ by a similar argument. Since any non-planar graph admits a topological embedding from one of these two graphs by Kuratowski's theorem, the ``only if'' claim follows. The ``if'' claim follows from Proposition \ref{prop:planar star}.
\end{proof}

\begin{figure}[ht]
\centering
\begin{tikzpicture}
%\foreach  \x in {-.75,.75}\foreach \y in {-.75,0,.75}
%\fill[black] (\x,\y) circle (2pt);
\draw (-.75,.75) -- (.75,.75) -- (-.75,0) -- (.75,0) -- (-.75, -.75) -- (.75, .75);
\draw (.75, -.75) -- (-.75,.75) -- (.75, 0);
\draw (-.75,0) -- (.75,-.75)-- (-.75,-.75);
%\begin{scope}[xshift= 3cm]
%\draw [dashed, very thin, gray](.75, 0) -- (-.75,-.75);
%\draw [dashed, very thin, gray](-.75, 0) -- (.75,-.75);
%\draw[coloryellow] (-.75,.75) -- (.75,.75);
%\draw[colorskyblue](.75,.75) -- (-.75,0) -- (.75,0) -- (-.75, .75);
%\draw[colorvermillion](-.75, .75) -- (.75, -.75) -- (-.75,-.75) -- (.75, .75);
%\foreach  \x in {-.75,.75}\foreach \y in {-.75,0,.75}
%\fill[black] (\x,\y) circle (2.5pt);
%\end{scope}
%\begin{scope}[xshift= 6cm]
%\draw [dashed, very thin, gray](.75, 0) -- (-.75,0);
%\draw [dashed, very thin, gray](-.75,-.75) -- (.75,-.75);
%\draw[coloryellow] (-.75,.75) -- (.75,.75);
%\draw[colorskyblue](.75,.75) -- (-.75,0) -- (.75,-.75) -- (-.75, .75);
%\draw[colorvermillion](-.75, .75) -- (.75, 0) -- (-.75,-.75) -- (.75, .75);
%\foreach  \x in {-.75,.75}\foreach \y in {-.75,0,.75}
%\fill[black] (\x,\y) circle (2.5pt);
%\end{scope}
\begin{scope}[xshift= 4cm]
\node[draw=none,minimum size=2.5cm,regular polygon,regular polygon sides=5] (a) {};
\foreach \x in {1,...,5}
\foreach \y in {1,...,\x}
\draw(a.corner \x) -- (a.corner \y);
\draw(-.3,1.3) node {};
%\draw(-.1,.9) node[fill=white, inner sep=0pt] {};
\draw(.3,1.3) node {};
\draw(0,1.45) node{};
\draw(-1.13,.7) node {};
%\draw(-.9,.2) node[fill=white, inner sep=0pt]{};
\draw(-1.27,.1) node {};
\draw(-1.35,.45) node{};
%\foreach \x in {1,2,...,5}
%\fill[black] (a.corner \x) circle (2pt);
\end{scope}

\end{tikzpicture}
\caption{The atomic non-planar graphs $\completegraph{3,3}$ and $\completegraph{5}$.}\label{figure:nonplanar}
\end{figure}

\begin{remark}
The star classes shown to have order at most $2$ are in fact trivial, since the homology group in question is torsion-free \cite{KoPark:CGBG}. In the unordered case, the same argument establishes the ``only if'' claim, and the star classes in this case are nontrivial, as indeed every unordered star class is \cite{AnDrummond-ColeKnudsen:SSGBG}. This example represents some of the only known torsion in the homology of configuration spaces of graphs.
\end{remark}

Nevertheless, there is reason for hope. Although decomposability may fail for some or even all of our tori, this pathology is an artifact of working with homology; indeed, at the level of fundamental groups of configuration spaces, \emph{any} embedding between \emph{any} two graphs induces an injection.\footnote{We will not use this result, but see \cite{Swiatkowski:EHDCSG} for a proof premised on non-positive curvature. A purely topological proof is also available by combining results of \cite{AnPark:SBGC}.}

\subsection{Homotopical strategies}\label{section:homotopical strategies} The key to finding our way forward is locality. The content of the $\theta$-relation discussed above is that homological information in the configuration spaces of a graph is not localized at essential vertices; given sufficient topological connectivity, it flows freely throughout the graph. To overcome this obstacle, therefore, we will focus our attention solely on local information. In this effort, we borrow an idea first introduced in \cite{ChettihLuetgehetmann:HCSTL} and further studied in \cite{Ramos:CSGCTPC,LuetgehetmannRecio-Mitter:TCCSFAGBG}.

\begin{definition}
A \emph{graph with sinks} is a pair $(\graf,S)$ of a graph $\graf$ and a collection $S$ of vertices, called \emph{sinks}.
\end{definition}

A graph with sinks $(\graf,S)$ has ordered and unordered configuration spaces, properly denoted $F_k(\graf,S)$ and $B_k(\graf,S)$ respectively, in which particles are required to be distinct away from sinks. Henceforth, we suppress the set of sinks from the notation, as it will always be clear from context.

Graphs with sinks arise naturally as quotients of graphs by collections of subgraphs, and the quotient map in such a situation induces a map at the level of configuration spaces. The local information referenced above will be captured by the quotient of $\graf$ by the complement of the open star of a vertex. Thus, the examples of interest for us will all be quotients of star graphs by collections of univalent vertices.

\begin{definition}\label{def:local graph}
Let $I$ be a finite set and $\pi$ an equivalence relation on $I$. The \emph{local graph} on $\pi$ is the graph $\graphfont{\Lambda}(\pi)$ with set of vertices $\{v_0\}\sqcup\{v_{[i]}\}_{i\in I/\pi}$, in which $v_0$ and $v_{[i]}$ share a set of edges indexed by the equivalence class $[i]$. We regard every vertex of $\graphfont{\Lambda}(\pi)$ as a sink except for $v_0$.
\end{definition}

See Figure \ref{figure:local graphs} for two pertinent examples of this construction.

\begin{example}
We write $\stargraph{n}/\partial$ for the local graph on the indiscrete equivalence relation on $\{1,\ldots, n\}$. As a graph, $\stargraph{n}/\partial$ is the suspension of $n$ points, with one sink and one non-sink vertex.
\end{example}

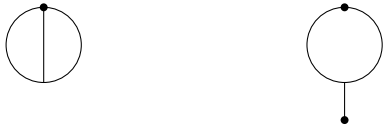
\begin{figure}[ht]
\begin{tikzpicture}
\begin{scope}
%\fill[black] (0,-.5) circle (2pt);
\fill[black] (0,.5) circle (1.5pt);
\draw(0,0) circle (.5cm);
\draw(0,.5) -- (0,-.5);
\end{scope}
%\fill[black] (0,-.5) circle (2.5pt);
%\draw(0,0) circle (.5cm);
%\draw(0,-1.2) node[below]{$\cyclegraph{1}$};
%\begin{scope}[xshift=2cm]
%\fill[black] (0,-.5) circle (2.5pt);
%\fill[black] (0,.5) circle (2.5pt);
%\draw(0,0) circle (.5cm);
%\draw(0,-1.2) node[below]{$\cyclegraph{2}$};
%\end{scope}
\begin{scope}[xshift=4cm]
%\fill[black] (0,-.5) circle (2pt);
\fill[black] (0,.5) circle (1.5pt);
\fill[black] (0,-1) circle (1.5pt);
\draw(0,0) circle (.5cm);
\draw(0,-1) -- (0,-.5);
\end{scope}
%\begin{scope}[xshift=6cm]
%\fill[black] (0,-.5) circle (2.5pt);
%\fill[black] (0,.5) circle (2.5pt);
%\fill[black] (0,-1) circle (2.5pt);
%\draw(0,0) circle (.5cm);
%\draw(0,-1) -- (0,-.5);
%\draw(0,-1.2) node[below]{$\lollipopgraph{2}$};
%\end{scope}

%\begin{scope}[xshift=10cm]
%\fill[black] (0,-.5) circle (2.5pt);
%\fill[black] (0,.5) circle (2.5pt);
%\draw(0,0) circle (.5cm);
%\draw(0,0) ellipse (.175cm and .5cm);
%\draw(0,-1.2) node[below]{$\thetagraph{4}$};
%\end{scope}

\end{tikzpicture}
\caption{The local graphs $\stargraph{3}/\partial$ and $\graphfont{\Lambda}(\pi_{23})$ where $\pi_{23}$ is the equivalence relation $2\sim 3$ on $\{1,2,3\}$.}\label{figure:local graphs}
\end{figure}

These local graphs have the wonderful property that their configuration spaces are also graphs (compare Theorem \ref{thm:tree}). A precise statement requires a certain amount of notational overhead, but the idea is extremely simple.

\begin{construction}
Fixing an equivalence relation $\pi$ on $I$ and $k\geq0$, let $\graphfont{\Lambda}_k(\pi)$ be the graph specified as follows.
\begin{enumerate}
\item For every partition $p=(p_{[i]})_{[i]\in I/\pi}$ of $k$ or of $k-1$, there is a vertex $v_p$.
\item\label{edge description} If $p'$ is obtained from $p$ by replacing $p_{[i]}$ by $p_{[i]}-1$, then $v_p$ and $v_{p'}$ share a set of edges indexed by the equivalence class $[i]$.
\end{enumerate}
We define a function $f_k:{\graphfont{\Lambda}}_k(\pi)\to B_k(\graphfont{\Lambda}(\pi))$ as follows.
\begin{enumerate}[resume]
\item The configuration $f_{k}(v_p)$ comprises $p_{[i]}$ particles located at $v_{[i]}$. If $p$ is a partition of $k-1$, then $f_{k}(v_p)$ also comprises a particle located at $v_0$.
\item If $p$ and $p'$ are as in (2), then the image under $f_{k}$ of the edge indexed by $j\in[i]$ is the path in which the particle at $v_0$ moves linearly onto $v_{[i]}$ along the edge of $\graphfont{\Lambda}(\pi)$ indexed by $j$.
\end{enumerate}
\end{construction}

\begin{proposition}\label{prop:local graph}
The function $f_k$ is a homotopy equivalence.
\end{proposition}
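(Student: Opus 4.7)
The plan is to produce an explicit deformation retraction of $B_k(\graphfont{\Lambda}(\pi))$ onto the image of $f_k$, while simultaneously verifying that $f_k$ is a topological embedding. Conceptually, we exploit the same local-flexibility principle that drives Proposition \ref{prop:deformation retract}, adapted to the present setting where particles may collide at sink vertices.

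First, I would use Observation \ref{observation:vertex}, applied at the unique non-sink vertex $v_0$, to deformation retract $B_k(\graphfont{\Lambda}(\pi))$ onto the subspace $B_k(\graphfont{\Lambda}(\pi))_{v_0}$ of configurations in which at most one particle lies in the open star of $v_0$. The usual sliding argument goes through verbatim: excess particles near $v_0$ slide down the edges; since they are free to collide once they reach the sinks, the motion extends continuously over the entire configuration space. Next, I would apply Observation \ref{observation:edge} to each edge of $\graphfont{\Lambda}(\pi)$, sliding every particle outside the open star of $v_0$ all the way into the sink at the far end of whichever edge contains it. This second retraction is unobstructed because the endpoints of these edges are sinks, so the collisions that occur as particles arrive are legal. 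The resulting subspace $Z\subseteq B_k(\graphfont{\Lambda}(\pi))$ consists of configurations in which all but at most one particle occupy the sinks $v_{[i]}$ (with arbitrary multiplicity), while the remaining particle (if any) is free to move within the closed star of $v_0$.

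Second, I would verify that $f_k$ factors as a homeomorphism $\graphfont{\Lambda}_k(\pi)\xrightarrow{\cong} Z$ followed by the inclusion $Z\hookrightarrow B_k(\graphfont{\Lambda}(\pi))$. On vertices this is a bookkeeping exercise: a configuration all of whose $k$ particles are located at sinks is uniquely recorded by a partition $p=(p_{[i]})_{[i]\in I/\pi}$ of $k$, and a configuration with $k-1$ particles at sinks plus one particle at $v_0$ is uniquely recorded by such a partition of $k-1$. This matches the vertices of $\graphfont{\Lambda}_k(\pi)$ bijectively. On edges, a point of $Z$ that is not already a vertex has the distinguished particle located on some edge $j$ of $\graphfont{\Lambda}(\pi)$ connecting $v_0$ to a sink $v_{[i]}$; parameterized by the arc length from $v_0$, this interval is precisely the edge of $\graphfont{\Lambda}_k(\pi)$ described by condition (\ref{edge description}) of the construction, with the endpoints matching the vertex identifications just made. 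Continuity of $f_k$ is clear, and injectivity together with compactness of the domain shows that $f_k$ is a homeomorphism onto $Z$.

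The main subtlety, though not really an obstacle, is ensuring that the two retractions above can be performed continuously across the entire space rather than one stratum at a time; this is handled by the standard gravity-style formula in which every particle outside the open star of $v_0$ moves at a rate proportional to its distance from the nearest sink, with the particle inside the open star (if any) held fixed. One can also bypass the two-step structure by invoking \cite[Lem.~2.0.1]{AgarwalBanksGadishMiyata:DCCSG} or \cite[Prop.~2.9]{Knudsen:OSTCGBG} directly, but since $\graphfont{\Lambda}(\pi)$ already has the minimal cell structure required for flexibility arguments, no subdivision is needed and the direct approach is cleaner.
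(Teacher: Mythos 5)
Your argument is correct and is essentially the paper's: one checks that $f_k$ is a homeomorphism onto $B_k(\graphfont{\Lambda}(\pi))_{v_0}$ and then invokes the sink version of Proposition \ref{prop:deformation retract} (no subdivision needed), which is exactly the fallback route you cite at the end. Note only that since every edge of $\graphfont{\Lambda}(\pi)$ is incident to $v_0$, the complement of the open star of $v_0$ is just the discrete set of sinks, so your space $Z$ already coincides with $B_k(\graphfont{\Lambda}(\pi))_{v_0}$; hence your second sliding step is vacuous, and the ad hoc ``gravity'' formula (whose continuity at the interface between fixed and moving particles would need care) can simply be dropped in favor of the cited lemma.
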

\begin{proof}
In essence, extending notation in the obvious way, it is easy to show that  $f_k$ is a homeomorphism onto $B_k(\graphfont{\Lambda}(\pi))_{v_0}$, so the claim follows from a version of Proposition \ref{prop:deformation retract} for graphs with sinks---see \cite{Knudsen:OSTCGBG}.
\end{proof}

Now, in contrast to the situation with trees, it often happens that a star class in $\graf$ becomes trivial in the corresponding local graph; fortunately, we may overcome this difficulty by working with the fundamental group.

\begin{example}\label{example:indiscrete}
If $\pi$ is the indiscrete equivalence relation on $\{1,\ldots, n\}$, then $\graphfont{\Lambda}_2(\pi)\cong\thetagraph{n}$, so $B_2(\stargraph{n}/\partial)$ is homotopy equivalent to a wedge sum of $n-1$ circles. It is easy to see that the fundamental group is freely generated by the set $\{[\gamma_1],\ldots, [\gamma_{n-1}]\}$, where $\gamma_i$ is the loop in which a single particle moves from the sink vertex to the non-sink vertex along the $i$th edge and back along the $(i+1)$st edge. With this in mind, it is easy to check that the quotient map $\stargraph{3}\to \stargraph{3}/\partial$ sends the generator of $\pi_1(B_2(\stargraph{3}))=\Z$ to the commutator $[[\gamma_1],[\gamma_2]]$.
\end{example}

\begin{example}\label{example:23}
Writing $\pi_{23}$ for the equivalence relation $2\sim 3$ on $\{1,2,3\}$, the graph $\graphfont{\Lambda}_3(\pi_{23})$ is depicted in Figure \ref{figure:three points}; thus, $\pi_1(B_3(\graphfont{\Lambda}(\pi_{23})))$ is free on $3$ generators. Consider the loop of configurations in $\stargraph{3}$ in which two particles orbit each other by passing through the central vertex, while a third particle remains stationary on the first edge. Under the homomorphism on fundamental groups induced by the map $B_3(\stargraph{3})\to B_3(\graphfont{\Lambda}(\pi_{23}))$, this loop is sent to the product of the first and second generators. In contrast, the loop in which the third particle remains stationary on the second (or third) edge is sent to the product of the second and third generators.
\end{example}

\begin{figure}[ht]
\begin{tikzpicture}[rotate=180]
%\begin{scope}
%\fill[black] (0,-.5) circle (2pt);
%\fill[black] (0,.5) circle (1.5pt);
%\draw(0,0) circle (.5cm);
%\draw(0,.5) -- (0,-.5);
%\end{scope}
%\fill[black] (0,-.5) circle (2.5pt);
%\draw(0,0) circle (.5cm);
%\draw(0,-1.2) node[below]{$\cyclegraph{1}$};
%\begin{scope}[xshift=2cm]
%\fill[black] (0,-.5) circle (2.5pt);
%\fill[black] (0,.5) circle (2.5pt);
%\draw(0,0) circle (.5cm);
%\draw(0,-1.2) node[below]{$\cyclegraph{2}$};
%\end{scope}
\begin{scope}
%\fill[black] (0,-.5) circle (2pt);
%\fill[black] (0,.5) circle (1.5pt);
%\fill[black] (0,-1) circle (1.5pt);
\draw(0,0) circle (.5cm);
\draw(2,0) circle (.5cm);
\draw(4,0) circle (.5cm);
\draw(.5,0) -- (1.5,0);
\draw(2.5,0) -- (3.5,0);
\draw(4.5,0) -- (5.5,0);
%\draw(0,-1) -- (0,-.5);
\end{scope}
%\begin{scope}[xshift=6cm]
%\fill[black] (0,-.5) circle (2.5pt);
%\fill[black] (0,.5) circle (2.5pt);
%\fill[black] (0,-1) circle (2.5pt);
%\draw(0,0) circle (.5cm);
%\draw(0,-1) -- (0,-.5);
%\draw(0,-1.2) node[below]{$\lollipopgraph{2}$};
%\end{scope}

%\begin{scope}[xshift=10cm]
%\fill[black] (0,-.5) circle (2.5pt);
%\fill[black] (0,.5) circle (2.5pt);
%\draw(0,0) circle (.5cm);
%\draw(0,0) ellipse (.175cm and .5cm);
%\draw(0,-1.2) node[below]{$\thetagraph{4}$};
%\end{scope}

\end{tikzpicture}
\caption{The graph $\graphfont{\Lambda}_3(\pi_{23})$.}\label{figure:three points}
\end{figure}
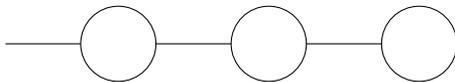

Surprisingly, these two simple examples are enough to yield the main results. In the ordered setting, we may take $k=2m(\graf)$ without loss of generality. As in Section \ref{section:homological strategies}, we obtain two maps $(S^1)^{m(\graf)}\to F_{k}(\graf)$ by choosing an embedding of $\stargraph{3}$ at each essential vertex and varying which particles orbit which vertices using a cyclic permutation. By Example \ref{example:indiscrete}, symmetry, and naturality, these two $m(\graf)$-tori are homotopically decomposable and homotopically disjoint, as witnessed by maps of the form
\[F_k(\graf)\to \bigsqcap_{v\in W}F_2(\graf)\to \bigsqcap_{v\in W}B_2(\graf)\to \bigsqcap_{v\in W} B_2(\stargraph{d(v)}/\partial)\] where the first is given by the relevant coordinate projections, the second forgets orderings in each factor, and the third is induced by collapsing the complement of the open star of a vertex to a point in each factor. The missing ingredient to connect to Example \ref{example:indiscrete} is the fact that $B_2(\stargraph{3}/\partial)\subseteq B_2(\stargraph{n}/\partial)$ is a homotopy retract, since it is homotopic to the inclusion of a connected subgraph by Proposition \ref{prop:local graph}.

In the unordered setting, we proceed as follows. At each vertex $v$ with $d(v)>3$, we choose two embeddings of $\stargraph{3}$ involving distinct sets of edges. At each trivalent vertex, all of which are separating by assumption, we choose two edges lying in different components of the complement. We obtain an $m(\graf)$-torus in $B_k(\graf)$ for any $k$ at least $2m(\graf)$ plus the number of trivalent vertices by allowing pairs of particles to orbit the essential vertices according to our chosen embeddings, with stationary particles near the trivalent vertices on our chosen edges. Invoking both examples, now, these two $m(\graf)$-tori are likewise homotopically decomposable and homotopically disjoint, as witnessed by the map
\[B_k(\graf)\to \bigsqcap_{v\in W}B_k(\graphfont{\Lambda}(\pi_v)\] given by the quotient map in each factor, where $\pi_v$ is the equivalence relation on the set of edges at $v$ given by the connected components of $\graf\setminus v$. If $d(v)>3$, then we quotient further to $\stargraph{d(v)}/\partial$ and connect to Example \ref{example:indiscrete} via a homotopy retraction onto $B_2(\stargraph{d(v)}/\partial)$. If $d(v)=3$, then we quotient further to $\graphfont{\Lambda}(\pi_{23})$, where edges $1$ and $2$ lie in distinct components of $\graf\setminus \{v\}$, and connect to Example \ref{example:23} via a homotopy retraction onto $B_3(\graphfont{\Lambda}(\pi_{23}))$.

Regardless, with a little care around issues of basepoints, these considerations and Corollary \ref{cor:homotopical tori} give a lower bound of $rm(\graf)$, which yields the claims when combined with Corollary \ref{cor:upper bound}.

\section{Open problems}\label{section:problems}

We close by highlighting some particularly interesting directions for future inquiry. We organize our thoughts around a longstanding orienting principle in the study of configuration spaces, namely the dichotomy of stability and instability. Writing $k$ for the number of particles, as above, a phenomenon is said to be \emph{stable} if it occurs for all sufficiently large $k$, or perhaps in the limit as $k$ tends to infinity (see \cite{McDuff:CSPNP,ChurchEllenbergFarb:FIMSRSG}, for example). Stable phenomena tend to be structured and calculable, unstable phenomena ephemeral and subtle. In this sense, Farber's conjecture is a stable problem, and framing it as such motivates extracting three separate questions (each with its obvious unordered counterpart).
\begin{itemize}
\item {\bf Stability.} Is the sequence $\{\tc_r(F_k(\graf))\}_{k\geq k_0}$ constant for some $k_0$?
\item {\bf Stable value.} What is $\tc_r(F_{k_0}(\graf))$?
\item {\bf Stable range.} What is the minimal value of $k_0$?
\end{itemize}
To the first question, we have seen that stability holds in general in the ordered setting, and we have established it in the unordered setting under certain assumptions on the topology of the graph.

\begin{problem}
Does stability hold in the unordered setting for graphs with non-separating trivalent vertices? 
\end{problem}

Even if this problem has a positive answer, it is not guaranteed that the stable topological complexity will be maximal.

\begin{problem}
Calculate $\lim_{k\to\infty} TC_r(B_k(\graf))$ in terms of the number of non-separating trivalent vertices of $\graf$.
\end{problem}

The simplest example of a graph with a non-separating trivalent vertex, namely the theta graph $\thetagraph{3}$, is probably also the most fundamental. Indeed, it seems plausible that the local techniques discussed above would serve to reduce these problems to the following.

\begin{problem}
Calculate $\tc(B_k(\thetagraph{3}))$ for large $k$.
\end{problem}

As for the stable range problem, our understanding is very limited. While the bound given by Farber's conjecture is sharp in general, examples are known where $k_0=2$ and $m(\graf)$ is arbitrarily large \cite{GonzalezGonzalez:ADGFCACSTPCG}. It seems that the nature of the stable range depends heavily on the topology of the background graph; in particular, the connectivity and first Betti number appear to be relevant.

\begin{problem}
Calculate the stable range (ordered or unordered) in terms of graph invariants.
\end{problem}

As with many stable problems, the corresponding unstable problem is interesting and much less well understood.

\begin{problem}
Calculate $\tc_r(B_k(\graf))$ and $\tc_r(F_k(\graf))$ for small $k$ in terms of graph invariants. 
\end{problem}

A complete answer is known for trees \cite{Aguilar-GuzmanGonzalezHoekstra-Mendoza:FSMTMHTCOCST}, but little is known for graphs with positive first Betti number. Remarkably, this problem is already interesting in the case $r=1$ of the Lusternik--Schnirelmann category!

It would likely be helpful in tackling these problems to know whether $\tc_r(B_k(\graf))$ is monotone in $k$, a property that holds in the ordered setting.

\begin{problem}
Is the function $k\mapsto \tc_r(B_k(\graf))$ non-decreasing? What if $\graf$ is replaced by a general topological space?
\end{problem}

As this story develops, it is likely that graphs with sinks will continue to play an important role, and they may be interesting objects of study in their own right.

\begin{problem}
Does an analogue of Farber's conjecture hold for configuration spaces of graphs with sinks?
\end{problem}

In the classical setting, as we have seen, (co)homology only gets us so far, past which point asphericity is key. Thus, it is natural to ask the following question.

\begin{problem}
Are configuration spaces of graphs with sinks also aspherical?
\end{problem}

The author must confess that he has not checked whether the classical arguments carry over into this setting, but see \cite{Ramos:CSGCTPC} for the case in which every vertex is a sink. 

Finally, we note that, in some applications, one is not merely interested in abstract motion planning, but moreso in \emph{efficient} motion planning. From this point of view, it is relevant to recall that a configuration space of a graph is homotopy equivalent to (two different!) CAT(0) cube complexes \cite{Abrams:CSBGG,Swiatkowski:EHDCSG}, which arrive equipped with an intrinsic metric and notion of geodesic.

\begin{problem}
Calculate the geodesic complexity \cite{Recio-Mitter:GCMP} of the cubical models of Abrams and {\'{S}}wi\k{a}tkowski. Does an analogue of Farber's conjecture hold here?
\end{problem}

\bibliographystyle{amsalpha}
\bibliography{references}

\end{document}